\title{The heat flow on glued spaces with varying dimension}
\author{
  Anton Ullrich\thanks{Carnegie Mellon University, 5000 Forbes Avenue, 
Pittsburgh, PA 15213, United States. \nolinkurl{aullrich@andrew.cmu.edu}}
}
\date{}
\definecolor{webgreen}{rgb}{0,.5,0}
\definecolor{webbrown}{rgb}{.6,0,0}
\definecolor{RoyalBlue}{cmyk}{1, 0.50, 0, 0}
\newcommand*{\dd}{\mathop{}\!\mathrm{d}}
\newcommand{\norm}[1]{\left\lVert#1\right\rVert}
\newcommand{\hookdoubleheadrightarrow}{
  \hookrightarrow\mathrel{\mspace{-15mu}}\rightarrow
}
\newcommand{\R}{\mathbb{R}}
\newcommand{\Capp}{\text{Cap}}
\newcommand*{\LL}{\mathcal{L}}
\newcommand*{\vol}{\text{vol}}
\DeclareMathOperator*{\essinf}{ess\,inf}
\DeclareMathOperator*{\supp}{supp}
\DeclareMathOperator*{\Lip}{Lip}
\DeclareMathOperator*{\spn}{span}
\DeclareMathOperator*{\Id}{Id}
\DeclareMathOperator*{\E}{\mathcal{E}}
\DeclareMathOperator*{\A}{\mathcal{A}}
\DeclareMathOperator*{\dist}{dist}
\DeclareMathOperator*{\diam}{diam}
\DeclareMathOperator*{\BV}{BV}
\theoremstyle{plain}
\newtheorem{definition}{Definition}[section]
\newtheorem{example}[definition]{Example}
\newtheorem*{theorem*}{Theorem}
\newtheorem*{conjecture*}{Conjecture}
\newtheorem*{example*}{Example}
\newtheorem*{definition*}{Definition}
\newtheorem*{remark*}{Remark}
\newtheorem*{prop*}{Proposition}
\newtheorem{theorem}[definition]{Theorem}
\newtheorem{proposition}[definition]{Proposition}
\newtheorem{lemma}[definition]{Lemma}
\newtheorem{remark}{Remark}
\newtheorem*{lemma*}{Lemma}
\pgfplotsset{compat=newest}
\providecommand{\keywords}[1]
{
  \small	
  \textbf{\textit{Keywords:}} #1
}
\begin{document}

\maketitle

\begin{abstract}
In this paper, we introduce a new concept of glued manifolds and investigate under which conditions the canonical heat flow on these glued manifolds is ergodic and irreducible. Glued manifolds are metric spaces consisting of manifolds of varying dimension connected by a weakly doubling measure. This can be seen as a condition on the jump in dimension. From another perspective, this construction also defines the Brownian motion on these glued spaces. Using the heat flow, we construct a nonlocal perimeter functional, the heat excess, to raise the question of its $\Gamma$-convergence to the standard perimeter functional.

In this context, we connect our work to the previous work on the convergence of perimeter functionals, approximations, and existence of heat kernels, as well as short-time expansions of Brownian motion.
\end{abstract}

\keywords{Heat flow, glued manifolds, stratification, ergodic flows, Brownian motion, manifold learning.\\
\emph{MSC2020:} 58J35 (Primary); 58A35; 37A30 (Secondary)}

\tableofcontents

\makeatletter
\providecommand\@dotsep{5}
\makeatother

\section{Motivation}

The heat equation is one of the most important and most often applied partial differential equations. This equation has applications in a large number of areas and fields. In this paper, we analyze the existence of an ergodic heat flow on so-called weighted glued manifolds.
These include spaces consisting of manifolds with different dimensions that are connected through weights. The idea behind these spaces is to incorporate a general setting, for instance, for manifold learning and similar applications where data is distributed locally in the shape of manifolds but can vary globally. This assumption is also called the union of manifold hypothesis. To be more precise, the data in machine learning is often given through points in a very high dimensional space. For instance, in the classical example of MNIST, a data set consisting of hand-drawn digits, one has a training data set of 60{.}000 digits which are represented by $28\times 28$ gray scale images. The canonical embedding space is $\mathbb{R}^{784}$. However, data can be classified by algorithms with comparatively few parameters. This success gives rise to the manifold hypothesis, namely that the data possesses an intrinsic lower dimensional structure. In the example of MNIST, one could think of a few parameters describing the structure of each digit (for $8$ this would be two parameterized loops). Here, one can already see that not all digits have the same variety. For example, a $0$ has less features than an $8$. The union of manifold hypothesis suggests that one should think of the data as combined manifolds of different dimensions. This is supported by numerical simulations~\cite{UnionManifold}.

The weights in our formulation ensure that each component contributes in the same way and that the data are overall continuous in a weak sense, which is made precise later. To state our main theorem, Theorem~\ref{mainThm}, in an informal way, we prove the following:
\begin{theorem*}
    Let $(X,d,\mu)$ be the union of weighted smooth manifolds with transversal intersections such that $\mu$ satisfies a weak doubling condition. Then, the canonical heat flow in $X$ is ergodic and irreducible.
\end{theorem*}

The heat flow here is defined through the Dirichlet form given from the manifolds with the combined volume measure. This definition leads to the Laplace-Beltrami operator on both manifolds individually and has a Dirichlet and Neumann compatibility condition at the intersection, that is, the inflow and outflow coincide, which ensures mass preservation. This also constructs the canonical Brownian motion as the process with the Laplace-Beltrami operator as the generator. The ergodicity and irreducibility follows from a spectral gap obtained for the Laplace-Beltrami operator.

The weak doubling condition heuristically says that the jump in dimension between the manifolds cannot be too high. If the effective difference, which is the difference with consideration of the weights, is larger than one, the flow will separate and the manifolds will not be connected.

Since the heat flow is ergodic and irreducible, it is possible to investigate the short-term behavior of the heat flow and ask the question of the $\Gamma$- (or Mosco-)convergence of a nonlocal perimeter functional to the local perimeter. With this in hand, one can construct a classifier on these spaces. Moreover, the definitions carry over to a more general setting of metric measure spaces instead of smooth compact manifolds. For more details, see Section~\ref{Outlook}.

In Section~\ref{background}, we compare our results with existing work and present related settings and papers. 

\subsection{Structure of the paper}

First, in Section~\ref{Definitions}, the setting in which we are working and the basic definitions are stated together with our main theorem in a precise form, Theorem~\ref{mainThm}. We consider a class of glued manifolds with mild assumptions on their weights to ensure connectedness in a weak sense.\\
Additionally, we recapitulate the standard theorems and properties which we apply later. The advanced theorems and notations are introduced in their respective sections to avoid overloading Section~\ref{Definitions}.

Afterwards, in Section~\ref{manifoldSetting}, we prove our main theorem in a simple setting in three steps. We begin with the definition of the heat flow and associated operators and spaces. Next, we prove continuity in a weak sense via capacity bounds and conclude by a compact embedding in the sense of Rellich-Kondrachov. This embedding yields the ergodicity and irreducibility.\\
Basically, the idea is to use the local structure of the manifolds to show that only constant functions remain static under the flow. Together with spectral theory of the resolvent associated with the flow, this allows us to prove a spectral gap and exponential convergence to constants for every initial condition. This convergence, in turn, implies ergodicity and irreducibility.

In Section~\ref{Generalization}, we point out ways to extend the results to more general weights for glued spaces and prove that our procedure from Section~\ref{manifoldSetting} can be iterated to the setting of multiple glued manifolds as long as they only intersect pairwise.

Next, in Section~\ref{Outlook}, we present the definitions and basic theory needed to state a conjecture about the $\Gamma$-convergence (or Mosco-convergence) of a nonlocal perimeter functional to the perimeter functional in glued spaces and general metric measure spaces. This functional is known as the heat excess and uses the heat flow constructed beforehand.

Finally, in Section~\ref{background}, we discuss related work in the different areas of this paper and connections to the different fields of study where these problems occur.

\section{Setting and Main Theorem}
\label{Definitions}

We now present the setting for the metric measure space $(X,d,\mu)$. We first state the results for two compact manifolds and then extend to a generalized setting.

Our domain $X$ is the union of two embedded smooth Riemannian manifolds that can vary in dimension and intersect. To be precise,
$$X\coloneqq M_1\cup M_2.$$
where $M_i$ are compact connected Riemannian manifolds with a smooth boundary. We assume that the dimension of $M_i$ is $n_i$ and the intersection $\emptyset\neq L\coloneqq M_1\cap M_2$ lies within $M_i$ and that $L$ is a smooth manifold of dimension $k\leq \min(n_1,n_2)$. This is the case if, for instance, the intersection is transversal in the surrounding space or a smooth manifold containing an open neighborhood of the intersection, see Definition~\ref{Def:Transversal}.

The distance $d$ is defined as the glued distance defined as follows, where $d_i$ is the distance in $M_i$.
\begin{definition}[Glued distance]
    For the manifolds $M_1,M_2$, we define the glued distance $d$ as:
    \begin{align*}
        d(x,y)&\coloneqq\inf\left\{\sum\limits_{i=0}^{N-1} \widetilde d(x_i,x_{i+1}):x_0=x, x_N=y\right\},\\
    \widetilde d(x,y)&\coloneqq\begin{cases}
        \min(d_1(x,y),d_2(x,y)) &\text{for }x, y\in L,\\
        d_1(x,y) &\text{for }(x,y)\in M_1\times M_1, (x,y)\not\in M_2\times M_2,\\
        d_2(x,y) &\text{for }(x,y)\not\in M_1\times M_1, (x,y)\in M_2\times M_2,\\
        \infty &\text{for }(x,y)\not\in (M_1\times M_1)\cup (M_2\times M_2).
    \end{cases}
    \end{align*}
\end{definition}
which is obtained by taking the infimum over the length of all connecting paths between points. This distance restricted to each manifold $M_i$ is equivalent to the original distance on the manifolds, since the intersection $L$ is a compact smooth submanifold. All metric balls $B$ in this chapter are considered with respect to this distance $d$ on $X$ if not stated otherwise. But we show that other metrics induce the same topology and even are equivalent, see the comment after Proposition~\ref{Prop:LipChar}.

Lastly, $\mu$ takes the role of a volume measure on $X$ that connects the manifolds and is the sum of the weighted volume measures, namely
$$\mu\coloneqq \omega_1\cdot\text{vol}_{M_1}+\omega_2\cdot\text{vol}_{M_2}$$
where the weights $\omega_i\in A_2(M_i)$, $\omega_i>0$ are Muckenhoupt, see Definition~\ref{Def:Muckenhoupt}. Additionally, we require that $\mu(L)=0$ and that the manifolds $M_i$ are transversal. Intuitively, this says that the manifolds do not coincide or go in the same direction in a local neighborhood near the intersection.

\begin{definition}[Transversality, {\cite[Section 1.5]{guillemin2010differential}}]
    Two manifolds $M_1,M_2$ are trans\-ver\-sal if 
    \begin{itemize}
        \item there exists a smooth manifold $M$ and a local neighborhood $U$ containing the intersection $L$ such that $M_i\cap U$ is a smooth submanifold of $M\cap U$ of dimension $n_i$
        \item for all $x\in L$, we have a relation of the tangent spaces:
        $$T_xM_1 + T_xM_2 =T_xM.$$
    \end{itemize}
    \label{Def:Transversal}
\end{definition}

Given this definition, there is a direct consequence which states the basic properties of the intersection.

\begin{proposition}[{\cite[Section 1.5]{guillemin2010differential}}]
    If $M_1$ and $M_2$ are transversal in $M$, their intersection $L$ is again a submanifold of $M$ with 
    $$\text{codim}(L)=\text{codim}(M_1) + \text{codim}(M_2)$$
    or equivalently, $k=n_1+n_2-n$.
\end{proposition}

Moreover, we assume that the measure $\mu$ satisfies a weak continuity in the style of a doubling condition:
\begin{definition}[$N$-doubling condition]
    A measure $\mu$ on a metric measure space $(X,d,\mu)$ is called $N$-doubling if for all $x\in X$ and radii $r>0$ the metric balls are doubling up to a function $N(r):$
    $$\mu(B_{3r}(x))\leq N(2r)\mu(B_r(x)).$$
\end{definition}
This ensures a weak type of continuity of $\mu$. This type of condition is the best we can hope for in this class of spaces, as a doubling condition would imply that the whole space has the same dimension. Moreover, there is a rich class of spaces that fulfill this type of generalized doubling condition. We provide examples in Example~\ref{Ex:1d2d} and Example~\ref{Ex:generalWeights}. 

\begin{remark*}
    The $N$-doubling condition can also be formulated with the Euclidean (extrinsic) distance. In this case, one would need to strengthen the transversality into a metric-quantitative version, i.e., there exist balls near the intersection which do not intersect the other manifold.
\end{remark*}

We note that $\mu$ is a positive Radon measure with $\supp \mu=X$ and that $L^2(X,\mu)$ is a Hilbert space.

\begin{remark*}
    Alternatively, one could consider another connection of the manifolds instead of the weights. It is possible to get a continuity over the intersection by collapsing an area around it into a lower-dimensional submanifold and changing the distances and topology accordingly. In this paper, we use the approach with weights as it satisfies a continuity over the whole domain and fits more naturally into the framework of manifold learning. With continuity, we mean that the effective dimension (including potentially vanishing or singular weights) does not jump and changes continuously. This ensures capacity bounds and trace conditions as proven in Section~\ref{manifoldSetting}.
\end{remark*}

Now, we are able to state the main theorem of this paper.

\begin{theorem}[Heat flow on the glued manifold]
\label{mainThm}
    Let $X=M_1\cup M_2$ with the glued distance $d$ be a given transversal union of two smooth compact manifolds with boundary such that intersection $\emptyset\neq L\coloneqq M_1\cap M_2$ lies in the inner of both of them with $\text{dim}(M_i)=n_i$, $\text{dim}(L)=k$. Moreover, assume nonnegative weights $\omega_1\in A_2(M_1)$ and $\omega_2\in A_2(M_2)$, $\omega_i>0$ such that $\omega_2$ is bounded from above and below, $\omega_1$ is bounded in a neighborhood of $\partial_1 M_1$ and $k=n_2-1$ where without loss of generality $n_1\geq n_2$.
    Additionally, assume that
    $$\mu\coloneqq \omega_1\cdot\text{vol}_{M_1}+\omega_2\cdot\text{vol}_{M_2}$$
    fulfills $\mu(L)=0$ and is $N$-doubling with $N\in L^1_{loc}(\mathbb{R}_{\geq 0})$.

    Then, the canonical heat flow is ergodic and irreducible.
\end{theorem}

This theorem can be generalized in a form that allows for unbounded weights on both manifolds and for multiple manifolds that intersect. This is done in Section~\ref{Generalization}, Theorem~\ref{Thm:GenMainThmHeatGlued}, and follows the same ideas and steps as the proof of Theorem~\ref{mainThm}. For clarity and to be better readable, we first state the simpler version and prove this.

The construction of the heat flow is done via its semigroup, which itself originates from the natural Dirichlet form $\mathcal{E}$ on $X$:
\begin{align*}
    \E(f)&\coloneqq \int\limits_X|\nabla f|^2\dd\mu\coloneqq \int\limits_{M_1}|\nabla^{M_1} f|^2\dd\mu_1+\int\limits_{M_2}|\nabla^{M_2} f|^2\dd\mu_2,\\
    D(\E)&\coloneqq \overline{C^\infty(\R^{\widetilde n})}^{\sqrt{\norm{\cdot}_{L^2(X,\mu)}^2+\E(\cdot)}}=:H^1_*.
\end{align*}
Here, $\widetilde n$ is the dimension of the surrounding space in which $X$ is embedded. 

That is, $\E$ is the manifold gradient in each manifold $M_i$ combined with the partial measure $\mu_i\coloneqq \omega_i\cdot\text{vol}_{M_i}$. Sometimes, this Dirichlet form is also called Cheeger energy; see, for example, \cite{buffa2021bv}. We call the individual Dirichlet forms $\E_i(f)\coloneqq \int_{M_i}|\nabla^{M_i} f|^2\dd\mu_i$. For a general definition of Dirichlet forms and their properties, see Definition~\ref{Def:Dirichletform} and the following.

\begin{figure}[!ht]
    \centering
    \begin{tikzpicture}
        \draw (0,0) -- (0,-2);
        \shade[shading=radial, inner color=white, outer color=gray!60!white, opacity=0.70] (2,0) arc (0:360:2 and 0.6); 
        \draw (2,0) arc (0:360:2 and 0.6) node[above right]{$M_1$};
        \draw (0,0) -- (0,3) node[right]{$M_2$};
        \draw[domain=-2:-1/15, smooth, variable=\x, gray, dashed] plot ({\x}, {abs(5*\x)^(-1)});
        \draw[domain=1/15:2, smooth, variable=\x, gray, dashed] plot ({\x}, {abs(5*\x)^(-1)}) node at (1.0, 1.2) {$\omega_1=\frac{1}{|x|}$};
    \end{tikzpicture}
    \caption{Example of a glued manifold consisting of the weighted manifolds $M_1$ and $M_2$.}
    \label{fig:example}
\end{figure}

\begin{example}
\label{Ex:1d2d}
    A typical example would be a one-dimensional line intersecting a two-dimensional disk with the density $\frac{1}{|x|}$ that lifts the dimensions at their intersection. This is done to ensure that their dimensions do not differ too much and that the measure is continuous in the sense of an $N$-doubling condition. 
    
    A visualization can be seen in Figure~\ref{fig:example}. We show that if the difference in dimension at their intersection is less than one, the heat flow connects both of them and is ergodic. This is an example of a transversal intersection as one can take $\mathbb{R}^3$ as the surrounding manifold and compute $\spn\{e_1,e_2\}+\spn\{e_3\}=\mathbb{R}^3$.
    
    In the case that the difference in their dimension is too large, the capacity of the intersection with respect to one of both manifolds is zero, and thus the heat flow does not need to be ergodic. For this precise example, the disk and line each without density, the flow would be separate on the disk and the line. This is due to the fact that the intersection is a point and thus has capacity zero in the disk, i.e., it is too small for the flow and the Brownian motion meets it with probability zero, see~\cite[Theorem 3.2]{BMCap}. Similarly, from the perspective of the line, the mass of the disk near the intersection is not enough.
\end{example}

Now, we give the precise definition of Dirichlet forms as well as of Muckenhoupt weights.
The following definitions can be compared to~\cite{fukushima2010dirichlet, ruiz2023dirichlet, sturm1994analysis}.
\begin{definition}[Dirichlet form]
\label{Def:Dirichletform}
A symmetric form (nonnegative definite densely defined symmetric bilinear form) $(\mathcal{E}, D(\mathcal{E}))$ on $L^2$ is called a Dirichlet form if it is closed and Markovian, i.e.,
$$D(\mathcal{E}) \text{ with norm }\norm{u}_{H^1_*}^2\coloneqq \norm{u}_{L^2}^2+\mathcal{E}(u,u) \text{ is complete}$$
and for
$$u\in D(\mathcal{E}), v=(0\lor u)\land 1,\text{ we have }v\in D(\mathcal{E}), \mathcal{E}(v,v)\leq \mathcal{E}(u,u).$$
A core $\mathcal{C}$ of $\mathcal{E}$ is a dense subset of $D(\mathcal{E})\cap C^0$ in the sense that $\mathcal{C}$ is dense in the domain $D(\mathcal{E})$ with respect to $\norm{u}_{H^1_*}$ and in $C^0$ with respect to $L^\infty$-norm. We call $\mathcal{E}$ regular if a core exists.

Lastly, a Dirichlet form $\mathcal{E}$ is called strongly local if for $u,v\in D(\mathcal{E})$ compactly supported and $u$ constant in a neighborhood of $v$: $\mathcal{E}(u,v)=0$. Furthermore, it is called strictly local (sometimes also called strongly regular) if the associated energy metric $d_\mathcal{E}$ (see below) is a metric and induces the same topology as $d$.
\end{definition}
\begin{remark*}
    We give a reason for the notation $\norm{u}_{H^1_*}$ in Section~\ref{manifoldSetting}.
\end{remark*}

We write $\mathcal{E}(u)$ for $\mathcal{E}(u,u)$. For closed forms, the Markovian condition can be rephrased as $\mathcal{E}$ being nonincreasing under all normal contractions.

Now, we define the energy metric and what we mean when we call a function (locally) Lipschitz on this space.
For $\varphi\in \mathcal{C}(\E)$, we define for $u$ in the core and thus by extension on the domain, the energy measure $\Gamma(u)$ which is a Radon measure, see~\cite{ruiz2023dirichlet, sturm1994analysis}, via:
$$\int_X \varphi\dd\Gamma(u)\coloneqq \mathcal{E}(\varphi u,u)-\frac{1}{2}\mathcal{E}(u^2,\varphi).$$
If $\Gamma$ is absolutely continuous, this can also be seen as a carr\'e-du-champ-operator if ones looks at the bilinear form obtained using a polarization formula and for $u,v,\varphi \in D(\mathcal{E})$
$$\int_X \varphi\dd\Gamma(u,v)=\frac{1}{2}\left(\mathcal{E}(\varphi u,v)+\mathcal{E}(\varphi v,u)-\mathcal{E}(\varphi,uv)\right).$$
Given $\Gamma$, we now can define the energy metric which sometimes is also called intrinsic metric for a regular Dirichlet form:
$$d_\mathcal{E}(x,y)\coloneqq \sup\left\{u(x)-u(y): u\in C^0(M), \frac{\dd \Gamma(u)}{\dd\mu}\leq 1\right\}.$$
This notion means that $\Gamma(u)$ is absolutely continuous with respect to $\mu$ and that the Radon-Nikodym derivative is bounded by $1$. A priori, this energy metric does not have to be a metric, as it can be infinite or zero for different $x,y\in M$. But for strictly local Dirichlet forms, this is by definition a metric.

We note that the forms $\mathcal{E}_i$ are strictly local regular Dirichlet forms, as they are the usual Dirichlet forms on smooth compact manifolds.

Finally, we call a function $u\in D(\mathcal{E})$ $\E$-Lipschitz, $u\in \Lip_{\E}(X)$, if $\Gamma(u)\ll\mu$ and the Radon-Nikodym derivative is bounded. This bound is then called the Lipschitz constant. In this case, we write $|\nabla u|^2\coloneqq \frac{\dd \Gamma(u)}{\dd\mu}$ and $|\nabla u|\coloneqq \sqrt{|\nabla u|^2}$. This coincides with our intuition that $\mathcal{E}$ is the $H^1$-seminorm and $\Gamma$ the square of the derivative. Moreover, for a strictly local Dirichlet form, this is equivalent to the usual definition of Lipschitz function with the distance $d_\mathcal{E}$ as seen in the next proposition. 
\begin{proposition}[Characterization Lipschitz functions, {\cite[Theorem 3.2, 4.1]{LipChar}}]
\label{Prop:LipChar}
    Let $X$ be a locally compact space with strictly local Dirichlet form $\mathcal{E}$. Then $(X,d_\mathcal{E})$ is a length space, and the space of Lipschitz functions $\Lip_{\E}$ in the above definition is a sheaf and coincides with the definition through the distance $d_\mathcal{E}$.
\end{proposition}
\begin{remark*}
    In the case where $\E$ is strictly local, we can work with the energy metric $d_\mathcal{E}$ instead of $d$. This is justified by the above proposition. By transversality and compactness, and since spaces are length spaces, this metric restricted to the partial spaces $M_i$ is equivalent to the original metrics $d_i$.
\end{remark*}

With the definition of our Dirichlet form $\E$ above with its domain $D(\E)$, we can see that $\E$ is also a strictly local regular Dirichlet form. The basic properties are proven in~\cite[Theorem 2.2]{paulik2005gluing} and the fact that it is strictly local in~\cite[Section 2.3]{paulik2005gluing}. Moreover, in this section it is also proven that the energy metric is equivalent to the glued metric. We could also define the domain of $\E$ via $\Lip_{\E}$ instead of $C^\infty$, which produces the same domain.

For the connectedness of $(X,d,\mu)$ in our sense, we need a Poincar\'e inequality.

\begin{definition}[$2$-Poincar\'e inequality]
    \label{Def:2PI}
    We say that $X$ satisfies a $2$-Poincar\'e inequality if there exists $c>0$ and a dilation factor $\lambda\geq 1$ such that for all integrable functions $f$ and for each ball $B\subseteq X$:
    \begin{align*}
        \fint\limits_B |f-f_B|^2\dd\mu&\leq cr^2\fint\limits_{\lambda B} |\nabla f|^2\dd\mu.
    \end{align*}
    Here, $f_B$ is the mean of $f$ over $B$:
    $$f_B\coloneqq \fint\limits_B f\dd\mu=\frac{1}{\mu(B)}\int\limits_B f\dd\mu.$$
\end{definition}
\begin{remark}
    \label{Rem:PIconn}
    For generalizations of the definitions, see Section~\ref{Outlook}.

    Note that if $X$ satisfies a Poincar\'e inequality, it has to be connected up to negligible sets. Assume that it would not be connected up to a null set and let $X_1$ be one component with $f=\chi_{X_1}$ and $|\nabla f|=0$. Then, this would be a contradiction as $\mu(X_1)>0$ and $1>f_B>0$ for $B$ large enough.

    If not specified, we assume the dilation factor $\lambda$ to be one.
\end{remark}

The heat flow is defined using its semigroup, which originates in the Dirichlet form.
In summary, this can be done as follows. Given the Dirichlet form $\E$, we define the Laplace-Beltrami operator $\A$ with its domain $D(\A)$ in a way that is reminiscent of the weak Laplacian. For details, see Subsection~\ref{Domain}. We show that $\A$ is a selfadjoint symmetric nonpositive operator that induces an analytic semigroup $S_h$, which we call the heat semigroup.

In Section~\ref{manifoldSetting}, we also show the continuity and ergodicity properties of these objects. For this, we need capacity bounds.

\begin{definition}[Capacities]
\label{Def:Capacity}
    Let $\Omega$ be a (relative) open subset of $X$. We define the relative $(2,\mu)$-capacity on compact sets $K$.
    \begin{align*}
        {\Capp}_{2,\mu}(K,\Omega)\coloneqq \inf\left\{\int\limits_\Omega |\nabla u|^2\dd\mu: u\in \Lip_{\E}(X)\cap C_c(\Omega), u\geq \chi_K\right\}.
    \end{align*}
    This definition then extends to measurable sets by approximation through inner regularity.
\end{definition}

\begin{remark}
\label{Rem:CapXi}
    In the case of a glued space, we write ${\Capp}_{2,\mu_i}(K,\Omega)$ as a shorthand for ${\Capp}_{2,\mu_i}(K\cap M_i, \Omega\cap M_i)$.
\end{remark}

For an overview of the theory of capacities and more general definitions, see~\cite{heinonen2018nonlinear, kilpelainen1994weighted, kinnunen1996sobolev}.

Additionally, we show that our constructed heat flow connects the entire space. The right notion to make this precise is the notion of ergodicity and irreducibility, see~\cite{fukushima2010dirichlet}. In the following, let $S_h:L^2(X,\mu)\to L^2(X,\mu)$ be a semigroup.

\begin{definition}[Irreducibility]
    \label{def:irreducible}
    We say that a measurable set $E\subseteq X$ is $S_h$ invariant if for all $f\in L^2(X,\mu)$ and $h>0$, we have 
    $$S_h(\chi_Ef)=\chi_ES_h(f)\ \mu\text{-a.e.}$$ 
    
    Then, $S_h$ is called irreducible if for all $S_h$ invariant sets $E$, we have $\mu(E)=0$ or $\mu(E^c)=0$.
\end{definition}

\begin{definition}[Ergodicity]
    \label{def:ergodic}
    An $L^2(X,\mu)$ semigroup $S_t$ is ergodic if the set function $T_t$ defined by
    $$T_t:X\to X,\quad E\mapsto \supp(S_t\chi_E)$$
    is ergodic for each fixed $t>0$, that is, if for a measurable set $E$ with $T_t^{-1}(E)= E$, we have $\mu(E)=0$ or $\mu(E^c)=0$.
\end{definition}

Lastly, for our setting of glued manifolds with weights, we need to define what is means to be a Muckenhoupt weight.

\begin{definition}[Muckenhoupt weights]
    \label{Def:Muckenhoupt}
    We say $\omega$ is in the class $A_2(M)$ of $2$-Muck\-en\-houpt weights on a smooth compact manifold $M$, if for all balls $B$,
    $$\fint\limits_B \omega_i\dd\vol_{M_i}\cdot \fint\limits_B \frac{1}{\omega_i}\dd\vol_{M_i}\lesssim 1.$$
    Here, $\lesssim$ mean that the left hand-side is bounded by the right hand-side up to a universal constant.
\end{definition}

\begin{remark*}
    For the typical weight $\omega=|x|^\alpha$ in $\mathbb{R}^n$, we have $\omega\in A_2$ iff $\alpha\in (-n,n)$.
\end{remark*}

\subsection{Basic Theorems for \texorpdfstring{$A_2$-}{A2-}Weights on Manifolds}
\label{SS:A2Prop}

In this section, we show that, if we have a smooth compact manifold $M$ and a Muckenhoupt weight $\omega\in A_2(M)$, then the measure $\mu=\omega\cdot{\vol}_M$ is doubling and fulfills the $2$-Poincar\'e inequality and a reverse Hölder inequality.
These theorems are mostly due to \cite[Chapter 15]{heinonen2018nonlinear} where they are carried out in the case of a flat Euclidean space. But one can also work in charts to obtain them on smooth manifolds.
\begin{lemma}[$A_2$ doubling]
A Muckenhoupt weight satisfies a volume doubling condition:
$$\mu(B_{2r})\lesssim \mu(B_r)\quad\forall r>0.$$
\end{lemma}
\begin{proof}
    For $E\subseteq B$ where $B$ is a ball, we have:
    \begin{align*}
        \vol_M(E)&=\int\limits_E \frac{\omega^\frac{1}{2}}{\omega^\frac{1}{2}}\dd x\\
        &\leq \sqrt{\mu(E)}\sqrt{\int\limits_E \frac{1}{\omega}\dd x}\\
        &\leq \sqrt{\mu(E)\vol(B)}\sqrt{\fint\limits_B \frac{1}{\omega}\dd x}\\
        &\lesssim  \sqrt{\mu(E)}\vol(B)\sqrt{\frac{1}{\mu(B)}}\\
        \Rightarrow \mu(B)&\lesssim \left(\frac{\vol_M(B)}{\vol_M(E)}\right)^2\mu(E).
    \end{align*}
    A direct consequence is the volume doubling for $E=B_r, B=B_{2r}$ since the volume measure of a compact manifold is doubling:
    \begin{align*}
        \mu(B_{2r})\lesssim \mu(B_r).
    \end{align*}
\end{proof}

The other properties can be proven locally on charts as we are only interested in inequalities on balls with a radius bounded above by an arbitrary but fixed bound.

For this, take balls $B_r^M$ in $M$. They are transformed by transition map $\phi$ to $A_r$. Since $\phi$ can be chosen to be Lipschitz, there exists $c\geq 1$ such that $B_\frac{r}{c}\subseteq A_r\subseteq B_{cr}$ and thus, since the metric is bounded, 
$$\vol_M(B_r^M)\simeq |A_r|\simeq |B_r|.$$
Therefore,
\begin{align*}
    \int\limits_{B_r}\omega\dd x\int\limits_{B_r}\frac{1}{\omega}\dd x&\leq \int\limits_{A_{cr}}\omega\dd x\int\limits_{A_{cr}}\frac{1}{\omega}\dd x\\
    &\lesssim |A_{cr}|^2\lesssim |B_r|^2.
\end{align*}
We conclude that $\omega$ (to be precise, $\omega\circ \phi^{-1}$) is also a Muckenhoupt weight on the flat chart, similar for $\omega\in A_p$, see Section~\ref{Outlook}. Hence, we can follow the usual proofs of \cite[Chapter 15]{heinonen2018nonlinear} to obtain a $p$-Poincar\'e inequality without dilation, cf.~Definition~\ref{Def:pPoincar\'e}. First, we can prove the doubling condition and also the reverse doubling condition which states that for each ball $B$ and measurable subset $E\subseteq B$ there exists $0<q<1$ such that
$$\frac{\mu(E)}{\mu(B)}\leq \left(\frac{|E|}{|B|}\right)^q.$$
From this, we directly conclude the reverse Hölder inequality: There is $r>1$ such that 
$$\left(\fint_B \omega^r\dd x\right)^\frac{1}{r}\lesssim \fint_B\omega\dd x.$$
This inequality can be used to obtain the open end property, that is, if $\omega\in A_p$ with $p>1$ then $\omega\in A_{p-\varepsilon}$ for some $\varepsilon>0$. Concluding, this yields an equivalent characterization via the continuity of the Hardy-Littlewood maximal function on $L^p(\mu)$-functions, which finally proves a Poincar\'e inequality on local balls for $
\mu=\omega\cdot{\vol}_M$ with $\omega\in A_2(M)$:
$$\int\limits_{B_r}|f-f_{B_r}|^2\dd \mu\lesssim r^2\int\limits_{B_r}|\nabla f|^2\dd \mu.$$

It is noteworthy that all consideration could also be done from the perspective of Brownian motion instead of the heat flow as they are equivalent.

In this paper, we focus on the perspective of the heat flow. The Brownian motion is a Markov process with the Laplace-Beltrami operator as infinitesimal generator. More general, we define the Brownian motion on a locally compact metric measure space with strongly local Dirichlet form as the Markov process with the semigroup $S_h$. This semigroup is constructed from the Laplace operator which comes from the Dirichlet form as explained in this and the next section; cf.~\cite{fukushima2010dirichlet}.

The heat flow and the Brownian motion are closely related to each other. In $\mathbb{R}^n$ or smooth manifolds, one could think of the heat kernel as the transition density of the Brownian motion. Similarly, we can solve the heat flow using the Brownian motion as the evaluation points, i.e., $u(x,t)=\mathbb{E}_x[u(B_t,0)]$. An example of a sample path of the Brownian motion on the sphere can be seen in Figure~\ref{Fig:BMSphere}.

\begin{figure}
    \centering
    \includegraphics[width=0.4\linewidth]{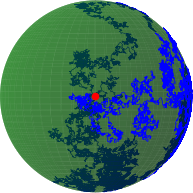}
    \caption{A sample path of the Brownian motion on a sphere.}
    \label{Fig:BMSphere}
\end{figure}

\section{Proof of Main Theorem}
\label{manifoldSetting}

\subsection{Definition of the Heat Flow}
\label{Domain} 

Given the setting of Section~\ref{Definitions}, we prove our main theorem, Theorem~\ref{mainThm}. This theorem can be extended to the more general case; for this, see Section~\ref{Generalization}. This includes the extension to multiple manifolds glued together with general Muckenhoupt weights on them.

The proof consists of three different steps to prove properties of the resulting space as well as of the operators and semigroup on it. This is done in the following. First, we define the relevant operators and function spaces. Afterwards, we prove capacity bounds. This we use to show a Rellich-Kondrachov embedding which yields ergodicity and irreducibility.

We start with the definition of the heat flow via a given Dirichlet form using the Laplace-Beltrami operator.

We recall from Section~\ref{Definitions} the energy norm
$$\norm{f}_{H^1_*}\coloneqq \sqrt{\norm{f}_{L^2(X,\mu)}^2+\E(f)}$$
and the domain of $\mathcal{E}$
$$D(\E)\coloneqq \overline{C^\infty(\R^{\widetilde n})}^{\norm{\cdot}_{H^1_*}}=:H^1_*.$$
Here, $\widetilde n$ is the dimension of the surrounding space $X$ is embedded in and we associate functions if their difference has norm zero. The star symbolizes the continuity which we see in Section~\ref{Capacity}. Heuristically, this can be seen as a pointwise continuity on the intersection and Neumann conditions at the boundary as well as flow conditions at points where the manifolds meet. As discussed in Section~\ref{Definitions}, one can also choose an intrinsic definition via $\Lip_{\E}$.

Additionally, we define the Laplace-Beltrami operator $\A$ with its domain $D(\A)$ using the Dirichlet form in the following way
$$D(\A)\coloneqq \{f\in D(\E)|\exists \A f\in L^2(X,\mu): (\A f,g)=-\E(f,g)\;\forall g\in D(\E)\}.$$
We note that $\E$ is densely defined, symmetric, positive, continuous with respect to $\norm{\cdot}_{H^1_*}$ (following from $\E_i$ by definition of $\E$) and closed (by Definition of $D(\E)$), compare also \cite[Theorem 2.2]{paulik2005gluing}, which implies, for instance by~\cite[Proposition 1.22]{ouhabaz2009analysis}, that $\A$ is a densely defined selfadjoint symmetric nonpositive operator and $\Id-\A$ is invertible. It induces an analytic semigroup $S_h$ that we call the heat semigroup. The spectral properties of $\A$ are computed via the resolvent 
$$R_{\A} =(\Id-\A)^{-1}:L^2(X,\mu)\to D(\A)\subseteq D(\E)$$
which satisfies $(f,g)=(R_{\A} f,g)+\E(R_Af,g), Af=f-R_{\A}^{-1}f$ for $f\in D(\A)$.

In Section~\ref{Embedding}, we show and use that $R_{\A}$ is compact to prove a spectral gap on $\A$.

\subsection{Capacity Bounds}
\label{Capacity}
In this part, we show that the $N$-doubling condition implies a capacity equivalence if $N$ is integrable:

\begin{lemma}[Capacity equivalence]
    Given the assumptions of Theorem~\ref{mainThm}, the capacity of the intersection is positive in both manifolds, that is, there exists $R>0$ such that 
    $${\Capp}_{2,\mu_1}(L,L_R)>0,\quad {\Capp}_{2,\mu_2}(L,L_R)>0$$
    where $L_R\coloneqq L+B_R=\bigcup\limits_{x\in L} B_R(x)$.
\end{lemma}

The proof is given in this subsection.
We recall that the intersection of the manifolds $L$ is a smooth manifold of dimension $k$.

In the following, we treat the intersection as a linear subspace $L\subseteq \R^k$. This is justified by considering charts of the manifolds such that they agree on the intersection $L$ and possess smooth (bi-Lipschitz) transition maps that map $L$ into the linear subspace.
By compactness of $L$ and $M_i$, finitely many suffice and the capacity of $L$ in $M_i$ is finite iff there is a chart in which the relative capacity is finite.

Moreover, by the continuity of the metric, the charts are chosen such that $\sqrt{\det g}$ is uniformly bounded from above and below and so that they agree on $L$ for both manifolds. Thus, in this section, we work with the Lebesgue measure $\mathcal{L}^n=\dd x$ instead of the volume measure ${\vol}_M$ because the calculations are equivalent up to a factor.

First, we need an equivalence of the partial measures close to the intersection.

Let $R_*(x)$ be small enough so that for every $x_*\in L, R<R_*$ there is a point $x\in \partial B_\frac{R}{2}(x_*)\cap M_1$ with $B_\frac{R}{2}(x)\subseteq M_1$. Then, we can estimate the partial measure $\mu_i$ on the balls at the intersection. Without loss of generality, only written down in the direction $"M_2\leq M_1"$:
\begin{align*}
    \mu_2(B_R(x_*))&\leq\mu(B_R(x_*))\leq \mu(B_{\frac{3}{2}R}(x))\leq N(R)\mu(B_\frac{R}{2}(x)))\\
    &=N(R)\mu_1(B_\frac{R}{2}(x))\leq N(R)\mu_1(B_R(x_*)).
\end{align*}

This means that we have
$$\forall x\in L, R<R_*(x): N(R)^{-1}\leq\frac{\mu_1(B_R(x))}{\mu_2(B_R(x))}\leq N(R).$$
\begin{remark*}
    $R_*(x)$ can be uniformly bounded from below, since $M_i$ and thus also $L$ are compact manifolds.

    The point $x$ must exist by dimensional arguments as $k<k+1=n_2<n_1$.
    Therefore, there exists a direction solely in $T_{x_*}M_i$ for each $x_*\in L$ that yields the existence of $x$ by the smoothness of the manifolds.
\end{remark*}

To compute the capacity, it proves useful to consider a thickened version of the intersection $L_R\coloneqq\bigcup\limits_{x\in L} B_R(x)$ as we estimate the relative capacity ${\Capp}_{2,\mu}(L_r,L_R)$ on $L_R$ and use an approximation $L_r\to L$ as $r\searrow 0$. In the following, we estimate the capacity below for both manifolds to ensure that the manifolds are sufficiently connected. The techniques are based on \cite[Chapter 2]{heinonen2018nonlinear} where the case of balls is proven.

For this calculation, we need a representation formula. We work on $\mathbb{R}^{n_i}$. By the estimate of the volume measure and the smoothness of the transition map, this is justified, as the calculations carry over to the charts and manifolds. Fix a function $u\in C^1_c$. Then, for a direction $w\in\mathbb{R}^{n_i}$ such that $|w|=1$:
$$u(x)=-\int\limits_0^\infty \partial_r u(x+rw)\dd r.$$
The procedure can be seen geometrically as cylinder like coordinates, see Figure~\ref{fig:procedure}.
By integrating over the unit sphere in dimension $n_i-k$ (extended with $0$ components in $\mathbb{R}^{n_i}$) and with $x=0$ and $u(0)=1$, we get the following:
\begin{align*}
    c_{n_i-k}&=-\int\limits_0^\infty\int\limits_{\mathbb{S}^{n_i-k-1}}\partial_ru(rw)\dd\mathcal{H}^{n_i-k-1}(w)\dd r\\
    &=-\int\limits_{\mathbb{R}^{n_i-k}}r^{-(n_i-k-1)}\partial_ru(rw)\dd\mathcal{H}^{n_i-k}(r\cdot w)\\
    &=-\int\limits_{\mathbb{R}^{n_i-k}}\frac{r}{|rw|^{n_i-k}}w\cdot\nabla^{n_i-k}u(rw)\dd\mathcal{H}^{n_i-k}(r\cdot w)\\
    &=-\int\limits_{\mathbb{R}^{n_i-k}}\frac{y\cdot \nabla^{n_i-k}u(y)}{|y|^{n_i-k}}\dd\mathcal{H}^{n_i-k}(y).
\end{align*}
This allows an estimate for the capacity of partial radially symmetric sets for $u$ as in the definition of ${\Capp}_{2,\mu}(L_r, L_R)$ (Definition~\ref{Def:Capacity}):
\begin{align*}
    1&\simeq \left(\int\limits_L \dd\mathcal{H}^k(x)\right)^2\\
    &\simeq\left(\int\limits_L \int\limits_{\mathbb{R}^{n_i-k}(x)}\frac{(x-y)\cdot \nabla^{n_i-k}u(y)}{|x-y|^{n_i-k}}\dd\mathcal{H}^{n_i-k}(y)\dd\mathcal{H}^k(x)\right)^2\\
    &\lesssim \int\limits_{L_R}|\nabla^{n_i-k}u|^2\dd\mu_i\cdot \int\limits_{L_R\setminus L_r}\frac{1}{\dist(y, L)^{2(n_i-k-1)}}\frac{\dd\LL^{n_i}(y)}{\omega_i(y)}.
\end{align*}
By estimating the gradient pointwise against the lower dimensional gradient, this yields:
\begin{align*}
    {\Capp}_{2,\mu_i}(L, L_R)&={\Capp}_{2,\mu_i}\left(\bigcap\limits_{r> 0}L_r, L_R\right)=\lim\limits_{r\searrow 0} {\Capp}_{2,\mu_i}(L_r, L_R)\\
    &\geq \lim\limits_{r\searrow 0}\inf\left\{\int\limits_{L_R} |\nabla^{n_i-k} u|^2\dd\mu_i:u\geq \chi_{L_r}\right\}\\
    &\gtrsim \left(\int\limits_{L_R}\frac{1}{\dist(y, L)^{2(n_i-k-1)}}\frac{\dd\LL^{n_i}(y)}{\omega_i(y)}\right)^{-1}.
\end{align*}

\begin{figure}[!t]
    \centering
    \begin{tikzpicture}[z ={(0,0,-cos(45))}, scale=2]
        \draw (0,-1.2,0)--(0,0,0);
        \coordinate (A) at (-1,0,-1); 
        \coordinate (B) at (1,0,-1); 
        \coordinate (C) at (1,0,1); 
        \coordinate (D) at (-1,0,1); 
        \draw (A) -- (B) -- (C) node[pos=0.6, right]{$\mathbb{R}^{n_i-k}$} -- (D) -- cycle;
        \shade[shading=radial, inner color=white, outer color=gray!60!white, opacity=0.70] (A) -- (B) -- (C) -- (D) -- cycle;
        \draw (0.5,0,0) arc (0:360:0.5 and 0.15) node[above right=-0.1]{$\mathbb{S}^{n_i-k}$};
        \draw[dotted] (0.5,0.5,0) arc (0:360:0.5 and 0.15);
        \draw[dotted] (0.5,-0.5,0) arc (0:360:0.5 and 0.15);
        \draw[dotted] (0.5,-1,0) arc (0:360:0.5 and 0.15);
        \draw (0,0,0)--(0,0.7,0) node[above right]{$L\subseteq \mathbb{R}^k$};
    \end{tikzpicture}
    \caption{We prove the lower capacity bound via a foliation procedure as shown in this figure.}
    \label{fig:procedure}
\end{figure}

Next, we note that in our setting the density $\omega_2$ is chosen in such a way that a $L$-Muckenhoupt type condition on the intersection is satisfied.

\begin{definition}[$L$-Muckenhoupt condition]
\label{Def:LMucken}
    We say that $\mu=\omega\cdot {\vol}_M$ satisfies the $L$-Muckenhoupt type condition for a submanifold $L$ in $M$ with $\text{dim}(L)=k, \text{dim}(M)=n$, if:
    \begin{align*}
        \int_{L_R}\omega\dd\vol_{M}\int_{L_R}\frac{1}{\omega}\dd\vol_{M}\simeq R^{2(n-k)}.
    \end{align*}
\end{definition}

This is due to the fact that $\omega_2$ is bounded from above and below. This can be compared to a Muckenhoupt type condition on the level of $L$.

The lower bound can now be simplified into a more suitable version and connected to the other manifold. By the layer cake formula and the transformation $\rho=t^{-\frac{1}{2(n_i-k-1)}}$:
\begin{align*}
    \int\limits_{L_R}&\frac{1}{\dist(y,L)^{2(n_i-k-1)}}\frac{\dd\LL^{n_i}}{\omega}\\
    &=\int\limits_0^\infty \frac{\LL^{n_i}}{\omega}(x\in L_R:\dist(x,L)^{-2(n_i-k-1)}>t)\dd t\\
    &=R^{-2(n_i-k-1)}\frac{\LL^{n_i}}{\omega}(L_R)+\int\limits_{R^{-2(n_i-k-1)}}^\infty \frac{\LL^{n_i}}{\omega}\left(L_{t^{-\frac{1}{2(n_i-k-1)}}}\right)\dd t\\
    &=R^{-2(n_i-k-1)}\frac{\LL^{n_i}}{\omega}(L_R)+2(n_i-k-1)\int\limits_0^R \rho^{-(2n_i-2k-1)}\frac{\LL^{n_i}}{\omega}(L_\rho)\dd \rho.
\end{align*}

To combine the estimates on both manifolds, we now compare $\frac{\LL^{n_i}}{\omega}(L_R)$  on both manifolds to get an estimate of $\frac{\LL^{n_1}}{\omega_1}(L_R)$ by $\frac{\LL^{n_2}}{\omega_2}(L_R)$. To this end, consider the Vitali covering theorem with the covering of $L_R$ by $B_R(x)$ for $x\in L$. There exist finitely many $x_j\in L$ such that $B_R(x_j)$ are disjoint and $$L_R\subseteq \bigcup 5B_j$$
where $5B_j\coloneqq B_{5R}(x_j)$. Since $\omega_i$ is a 2-Muckenhoupt weight, we know that $\omega_i\LL^{n_i}(B_R)$ and $\frac{\LL^{n_i}}{\omega_i}(B_R)$ can be compared: 
$$\omega_i\LL^{n_i}(B_R)\cdot \frac{\LL^{n_i}}{\omega_i}(B_R)\simeq R^{2n_i}.$$

Hence,
\begin{align*}
    \frac{\LL^{n_1}}{\omega_1}(L_R)&\leq \sum\limits_j \frac{\LL^{n_1}}{\omega_1}(5B_j)\lesssim \sum\limits_j \frac{\LL^{n_1}}{\omega_1}(B_j)\lesssim R^{2n_1}\sum\limits_j(\omega_1\LL^{n_1}(B_j))^{-1}\\
    &\leq N(R)R^{2n_1}\sum\limits_j (\omega_2\LL^{n_2}(B_j))^{-1}\lesssim N(R)R^{2n_1-2n_2}\sum\limits_j \frac{\LL^{n_2}}{\omega_2}(B_j)\\
    &\leq N(R)R^{2n_1-2n_2}\frac{\LL^{n_2}}{\omega_2}(L_R).
\end{align*}
In these estimates, we used the doubling condition on the weighted manifold $M_1$.

Together, this yields the following:
\begin{align*}
    &{\Capp}_{2,\mu_1}(L, L_R)\\
    &\gtrsim \left(R^{-2(n_1-k-1)}\frac{\LL^{n_1}}{\omega_1}(L_R)+2(n_1-k-1)\int\limits_0^R \rho^{-(2n_1-2k-1)}\frac{\LL^{n_1}}{\omega_1}(L_\rho)\dd \rho\right)^{-1}\\
    &\gtrsim \left(N(R)R^{-2(n_2-k-1)}\frac{\LL^{n_2}}{\omega_2}(L_R)+c\int\limits_0^R N(\rho)\rho^{-(2n_2-2k-1)}\frac{\LL^{n_2}}{\omega_2}(L_\rho)\dd \rho\right)^{-1}\\
    &\gtrsim \left(N(R)R^2\mu_2(L_R)^{-1}+c\int\limits_0^R N(\rho)\rho^{-(2n_2-2k-1)}\frac{\LL^{n_i}}{\omega_2}(L_\rho)\dd \rho\right)^{-1}.
\end{align*}

With our generalized Muckenhoupt condition on $\omega_2$, this has a more direct form and we can close the chain of arguments directly:
\begin{align*}
    {\Capp}_{2,\mu_1}(L, L_R)&\gtrsim \left(N(R)R^2\mu_2(L_R)^{-1}+c\int\limits_0^R N(\rho)\rho\mu_2(L_\rho)^{-1}\dd \rho\right)^{-1},\\
    {\Capp}_{2,\mu_2}(L, L_R)&\gtrsim \left(R^2\mu_2(L_R)^{-1}+2(n_2-k-1)\int\limits_0^R \rho\mu_2(L_\rho)^{-1}\dd \rho\right)^{-1}.
\end{align*}

Since we are in the case where $\omega_2$ is bounded and $k=n_2-1$, we see that the relevant condition on $N(R)$ is 
$$\int\limits_0^R N(\rho)\dd \rho<\infty$$
as we fixed $R>0$ and $\mu_2(L_\rho)\simeq \rho$. The dimensional condition on $k$ for a bounded weight is necessary for $\mu_2(L)=0$ and ${\Capp}_{2,\mu_2}(L, L_R)>0$.

Thus, the intersection has a finite positive capacity in $M_1$ in this setting if the same holds for $M_2$ and $\mu$ satisfies an $N$ double condition with integrable $N$.

Since the global and restricted capacity are equivalent for fixed $R>0$, this completes the proof.

\begin{remark}
    \label{Rem:Trace}
    Moreover, this implies that q.e.\ point of the intersection is a Lebesgue point, cf.~\cite[Theorem 3.5]{kilpelainen1994weighted} and \cite[Corollary 3.7]{kinnunen1996sobolev}. Therefore, we have a trace in the sense of a quasi-continuous refinement on $L$.
\end{remark}

\subsection{Compact Embedding of \texorpdfstring{$H^1_*$}{H1}}
\label{Embedding}

In this part, we show a compact Sobolev (or Rellich-Kondrachov) embedding of $H^1_*\hookdoubleheadrightarrow L^2(X,\mu)$.

We prove the compactness of the embedding through sequentially compactness and an inner approximation up to the boundary. The idea of this proof is based on the techniques of~\cite[Chapter 5]{paulik2005gluing}. For the convenience of the reader, we explain the details in our case here. 

\begin{lemma}[Compact embedding of $H^1_*$]
\label{Lemma:CompactEmbedding}
    Let $(X,d,\mu)$ be as in Theorem~\ref{mainThm} and $\E$ as before. Then, $H^1_*$ is compactly embedded in $L^2(X,\mu)$, i.e., for $(u_n)_{n\in \mathbb{N}}\subset H^1_*$ bounded, there exists a subsequence that converges strongly in $L^2(X,\mu)$.
\end{lemma}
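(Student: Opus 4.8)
\textit{Strategy.} I would follow the usual localisation scheme for Rellich-Kondrachov-type results (as in the cited work of Paulik): split a bounded sequence into a ``bulk'' part supported away from the singular locus $L$ and a ``junction'' part concentrated in a shrinking tube around $L$; extract an $L^2$-convergent subsequence of the bulk part from the weighted Rellich theorem on each manifold separately; and control the junction part by a smallness estimate uniform in the sequence. An $\varepsilon/3$ argument together with completeness of $L^2(X,\mu)$ then yields the claim.

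\textit{Cut-offs and the bulk.} Let $(u_n)\subset H^1_*$ be bounded. For $\varepsilon>0$ I would choose a Lipschitz cut-off $\eta_\varepsilon$ on $X$ with $0\le\eta_\varepsilon\le1$, $\eta_\varepsilon\equiv1$ off $L_{2\varepsilon}$, $\eta_\varepsilon\equiv0$ on $L_\varepsilon$ and $|\nabla\eta_\varepsilon|\le C\varepsilon^{-1}$; this is possible because $\dist(\cdot,L)$ is $1$-Lipschitz for the glued distance. The Leibniz rule for the energy measure gives $\eta_\varepsilon u_n\in H^1_*$ with $\E(\eta_\varepsilon u_n)\le 2\E(u_n)+2C\varepsilon^{-2}\norm{u_n}_{L^2(X,\mu)}^2$, so for each fixed $\varepsilon$ the sequence $(\eta_\varepsilon u_n)_n$ is bounded in $H^1_*$. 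Since $\eta_\varepsilon u_n$ is supported in the \emph{disjoint} union $(M_1\setminus L_\varepsilon)\sqcup(M_2\setminus L_\varepsilon)$, its restriction to $M_i\cap\{\dist(\cdot,L)\ge\varepsilon\}$ is a bounded sequence in the weighted Sobolev space $W^{1,2}$ of that compact manifold-with-boundary carrying the $A_2$-weight $\omega_i$; the weighted Rellich-Kondrachov theorem for $A_2$-weights (reduced to charts exactly as in Section~\ref{Definitions}) then yields a subsequence converging in $L^2(\mu_i)$. A diagonal extraction over $\varepsilon=1/m$, $m\in\N$, produces one subsequence, not relabelled, along which $(\eta_{1/m}u_n)_n$ converges in $L^2(X,\mu)$ for every $m$.

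\textit{The junction estimate -- main obstacle.} Everything then reduces to
$$\int_{L_{2\varepsilon}}|u_n|^2\dd\mu\le\psi(\varepsilon)\,\norm{u_n}_{H^1_*}^2,\qquad\psi(\varepsilon)\to 0\ \text{ as }\ \varepsilon\to 0,$$
uniformly in $n$, which is the genuinely delicate step. My preferred route is an improved-integrability argument: the $N$-doubling condition (with $N\in L^1_{loc}$) together with the positive capacities $\Capp_{2,\mu_i}(L,X)>0$ from the previous lemma upgrades, at bounded scales, to honest volume doubling plus a $2$-Poincare inequality on $(X,d,\mu)$, hence to a Sobolev-Poincare embedding $H^1_*\hookrightarrow L^q(X,\mu)$ for some $q>2$ (or, in the degenerate low-dimensional cases, a Morrey-type embedding into $L^\infty$); combining this with H\"older's inequality and $\mu(L_{2\varepsilon})\to\mu(L)=0$ gives $\psi(\varepsilon)\lesssim\mu(L_{2\varepsilon})^{1-2/q}$. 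Alternatively one argues geometrically: on $M_2$, where $k=n_2-1$ and $\omega_2\simeq1$, the classical trace inequality $\norm{u|_L}_{L^2(L)}\lesssim\norm{u}_{H^1_*}$ plus the fundamental theorem of calculus along the normal to the hypersurface $L$ yields $\int_{L_{2\varepsilon}\cap M_2}|u|^2\dd\mu_2\lesssim\varepsilon\norm{u}_{H^1_*}^2$; on $M_1$, where $L$ has codimension $n_1-k$, one reads the analogous bound off the foliation/representation formula of the capacity lemma (a Hardy-type inequality $\int_{L_R}\dist(\cdot,L)^{-\beta}|u|^2\dd\mu_1\lesssim\norm{u}_{H^1_*}^2$ for a suitable $\beta>0$). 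Verifying that the $N$-doubling condition is strong enough for whichever of these estimates one adopts -- in particular that it upgrades to genuine doubling at bounded scales -- is where I expect the main work to lie.

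\textit{Conclusion.} Granting the junction estimate, let $(u_n)$ denote the subsequence extracted above, fix $\delta>0$ and pick $m$ with $\psi(1/m)\sup_n\norm{u_n}_{H^1_*}^2<\delta^2$. Writing $u_n-u_l=\big(\eta_{1/m}u_n-\eta_{1/m}u_l\big)+\big((1-\eta_{1/m})u_n-(1-\eta_{1/m})u_l\big)$, the first bracket is Cauchy in $L^2(X,\mu)$ by the diagonal extraction, while the second has $L^2(X,\mu)$-norm at most $2\delta$; hence $\limsup_{n,l\to\infty}\norm{u_n-u_l}_{L^2(X,\mu)}\le2\delta$. Since $\delta>0$ was arbitrary this $\limsup$ vanishes, so $(u_n)$ is Cauchy in $L^2(X,\mu)$ and converges by completeness. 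This establishes the compact embedding $H^1_*\hookdoubleheadrightarrow L^2(X,\mu)$.
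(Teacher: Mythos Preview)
Your decomposition is placed at the wrong set. The paper does not cut off near the intersection $L$ but near the \emph{boundary} $\partial X=\partial M_1\cup\partial M_2$: it covers the interior $X\setminus X_\varepsilon$ (where $X_\varepsilon=\{\dist(\cdot,\partial X)<\varepsilon\}$) by finitely many balls on each $M_i$, combines the Poincar\'e inequality for $A_2$ weights on these balls with weak $L^2$-convergence of the means to obtain the Cauchy property there, and then the smallness $\sup_{\|u\|_{H^1_*}=1}\int_{X_\varepsilon}|u|^2\dd\mu\to0$ follows directly from smoothness of $\partial M_i$ (which lies away from $L$ by hypothesis). The intersection plays no role whatsoever in the compactness argument, because the ball-Poincar\'e inequality for $A_2$ weights holds on all of $M_i$, right up to and across $L$.

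Your junction estimate is therefore an artificial difficulty, and both routes you sketch for it have gaps. Route~(a) claims that $N$-doubling with $N\in L^1_{loc}$ together with positive capacity upgrades to honest doubling on $(X,d,\mu)$; but $N$ is allowed to blow up at $0$, which is exactly the relevant regime, so neither global doubling nor a Sobolev--Poincar\'e embedding $H^1_*\hookrightarrow L^q$ for some $q>2$ follows from the hypotheses. Route~(b) invokes a Hardy-type inequality on $M_1$ read off the representation formula of the capacity lemma; that formula, however, is a pointwise identity for compactly supported test functions normalised at the origin, not a weighted $L^2$ estimate, and since $L$ has \emph{positive} $\mu_1$-capacity, functions in $H^1_*$ generically have nonzero trace on $L$, which obstructs the standard Hardy mechanism. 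The cleanest repair is to drop the cut-off near $L$ altogether: $\omega_i\in A_2(M_i)$ on the full manifold, so the weighted Rellich theorem you cite already applies to each $M_i$ as a whole; since $\mu(L)=0$ gives $L^2(X,\mu)=L^2(M_1,\mu_1)\oplus L^2(M_2,\mu_2)$, a common subsequence converging on each piece converges on $X$.
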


\begin{proof}

First, we prove the compactness in the inner of the domain and then extend this result to the whole space using the smoothness of the boundary.

Take $(u_n)_{n\in\mathbb{N}}\subseteq D(\E)$ with 
$$\int\limits_X |u_n|^2\dd\mu+\E(u_n)\leq c<\infty$$ 
and define $X_\varepsilon\coloneqq \{x\in X:\dist(x,\partial X)<\varepsilon\}$ where $\partial X\coloneqq \bigcup\partial M_i$. We prove that this sequence has a converging subsequence in $L^2$.
We choose an open covering of $X\setminus X_\varepsilon$ by balls by doing this for both $M_i$ individually and taking the union. For this we follow~\cite[Lemma 5.2]{paulik2005gluing}. This covering consists of $q$ balls of fixed radius $r<\varepsilon$ such that 
$$q\lesssim \left(\frac{2(\diam(X)+r)}{r}\right)^\nu$$
and each point is contained in at most 
$l\lesssim 2^{4\nu}$ balls where $\nu$ depends on the doubling constants of $\vol_{M_i}$ and is independent of $r$.
In the following, we call this balls $B_i$ where only the balls restricted to the manifolds are meant. They do not need to be metric balls in $X$.

By Banach-Alaoglu there exists a subsequence and $u\in L^2$ such that $u_n\rightharpoonup u$ in $L^2$. Let $\omega_{n,m}\coloneqq u_n-u_m$ be the difference. We aim to show $\omega_{n,m}\to 0$ in $L^2(X\setminus X_\varepsilon)$.

With $(\omega_{n,m})_i$ as the mean value in $B_i$, we can estimate 
\begin{align*}
    \int\limits_{X\setminus X_\varepsilon}\omega_{n,m}^2\dd\mu&\leq \sum\limits_i\int\limits_{B_i}\omega_{n,m}^2\dd\mu\\
    &\lesssim \sum\limits_i\int\limits_{B_i}|\omega_{n,m}-(\omega_{n,m})_i|^2\dd\mu+ \sum\limits_i\int\limits_{B_i}|(\omega_{n,m})_i|^2\dd\mu\\
    &=:\text{I}+\text{II}.
\end{align*}
The first part I can be estimated by the Poincar\'e inequality on balls on $M_i$, see Definition~\ref{Def:2PI} and Subsection~\ref{SS:A2Prop}:
\begin{align*}
    \text{I}=\sum\limits_i\int\limits_{B_i}|\omega_{n,m}-(\omega_{n,m})_i|^2\dd\mu&\leq r^2\sum\limits_i\int\limits_{B_i}|\nabla \omega_{n,m}|^2\dd\mu\\
    &\leq lr^2\int\limits_X|\nabla \omega_{n,m}|^2\dd\mu\\
    &\lesssim lr^2.
\end{align*}

The second part II directly yields (with a possibly larger $\nu$ that only depends on the doubling constant of $\vol_{M_i}$):
\begin{align*}
    \text{II}&=\sum\limits_i\frac{1}{\mu(B_i)}\left(\int\limits_{B_i}\omega_{n,m}\dd\mu\right)^2\\
    &\leq q\max\limits_i\frac{1}{\mu(B_i)}\left(\int\limits_{B_i}\omega_{n,m}\dd\mu\right)^2\\
    &\lesssim q\frac{1}{r^\nu\mu(X)}\max\limits_i\left(\int\limits_{B_i}\omega_{n,m}\dd\mu\right)^2.
\end{align*}
Here, we used the doubling condition on $M_i$ to compare $\mu(B_i)$ to $\mu(X)$.

Since $\omega_{n,m}\rightharpoonup 0$ in $L^2$ as $n,m\to\infty$, we have $\int\limits_{B_i}\omega_{n,m}\dd\mu\to 0$.
Now, fix $\delta>0$ and plug in $r=\sqrt{\frac{\delta}{l}}$:
\begin{align*}
    \int\limits_{X\setminus X_\varepsilon}\omega_{n,m}^2\dd\mu\lesssim \delta+\frac{ql^\frac{\nu}{2}}{\delta^\frac{\nu}{2}\mu(X)}\max\limits_i\left(\int\limits_{B_i}\omega_{n,m}\dd\mu\right)^2<2\delta.
\end{align*}
This holds for $n,m$ large enough since $q$ is bounded independently of $n,m$. Thus, we can find $n_*,m_*$ dependent on $\delta$ such that for $n>n_*, m>m_*$ this inequality holds. We conclude that $\omega_{n,m}$ and thus $u_n$ converges to $0$ strongly in $L^2$.

To approximate $X$, we need that the derivative cannot concentrate on the boundary.
We define a quantity that measures the mass which can concentrate on the boundary
\begin{align*}
    \Gamma_X(\varepsilon)&\coloneqq \sup\limits_{\norm{u}_{H^1_*(X)}=1}\int\limits_{X_\varepsilon}|u|^2\dd\mu, \\
    \Gamma_X(0)&\coloneqq \lim\limits_{\varepsilon\searrow 0}\Gamma_X(\varepsilon).
\end{align*}
Since the manifolds are smooth with smooth boundary and bounded weights in a neighborhood of the boundary, we have $\Gamma_X(0)=0$, see~\cite[Chapter 5]{paulik2005gluing}.

Take $u_n$ a sequence in $L^2(X)$ such that $\norm{u_n}_{H^1_*(X)}\leq 1\;\forall n$.
Then, for a sequence $\delta_k>0$, we find $\varepsilon_k$ such that $\Gamma_X(\varepsilon_k)<\delta_k$ and a subsequence $u_n$ such that
\begin{align*}
    u_n&\rightharpoonup u \text{ in }L^2(X),\\
    \forall n\geq n_0(k)&: \norm{u-u_n}_{L^2(X\setminus X_{\varepsilon_k})}^2\leq \delta_k.
\end{align*}
The sequence is constructed via a diagonal sequence. First, take a subsequence $u_n\rightharpoonup u$ in $L^2(X)$. Afterwards, for $\varepsilon_k$, we can find refining subsequences such that $u_n\to u$ in $L^2(X\setminus X_{\varepsilon_k})$ and for $n\geq n_0(k)$: 
$$\norm{u-u_n}_{L^2(X\setminus X_{\varepsilon_k})}^2\leq \delta_k.$$
The existence of the whole sequence is given as a diagonal sequence.

Next, we can compute the norm to show strong convergence in $L^2(X)$:
\begin{align*}
    \limsup_{n\to 0}\norm{u_n-u}_{L^2(X)}^2&=\limsup_{n\to 0}\norm{u_n-u}_{L^2(X\setminus X_{\varepsilon_k})}^2+\limsup_{n\to 0}\norm{u_n-u}_{L^2(X_{\varepsilon_k})}^2\\
    &\lesssim \delta_k+\limsup_{n\to 0}\norm{u_n}_{L^2(X_{\varepsilon_k})}^2+\norm{u}_{L^2(X_{\varepsilon_k})}^2\\
    &\lesssim \delta_k+\limsup_{n\to 0}\norm{u_n}_{L^2(X_{\varepsilon_k})}^2\\
    &\lesssim \delta_k+\delta_k \norm{u_n}_{H^1_*(X)}^2\lesssim \delta_k.
\end{align*}
We used the lower semi-continuity of the $L^2$-norm with respect to the weak $L^2$-convergence. With $\delta_k\to 0$, this implies the compact embedding.    
\end{proof}

Therefore, the image of the resolvent $R_{\A}$ is compactly embedded into $L^2$ which yields that $R_{\A}$ is a compact resolvent.

Now, we can combine these calculations and conclude that only constants lie in the kernel of $\A$.\\
Given the above assumptions, $H^1_*$ has a continuity or trace condition, see Remark~\ref{Rem:Trace}. Thus, assume $f\in \text{Ker}(\A)$. Then, $$0=-(\A f,f)=\E(f).$$
On each manifold, we fix charts and take partition of unities on the charts which by the transformation to $\R^{n_i}$ yield 
$$\norm{\nabla f}_{L^2(\mu)}=0.$$
Since $\omega_i>0$, we conclude that $f$ is constant a.e.\ on each chart. By approximation and since the manifolds are connected, $f$ is constant on each manifold. Now, we can infer the value of $f$ on $L$ q.e.\ by the mean value on each manifold and it has to be the same on both sides (since the capacities are equivalent), which shows that $f$ has to be constant in $X$, that is, the kernel of $\A$ consists precisely of constant functions.

Together with the facts that $\A$ is a selfadjoint, nonpositive operator and has a compact resolvent by operator-theory, it follows that the spectrum of $\A$ is discrete with no finite accumulation point and $\A$ has a spectral gap and an orthonormal basis of eigenfunctions, cf.~\cite[Proposition 8.8]{taylor2010partial}. This is equivalent to satisfying a global Poincar\'e type inequality on the orthogonal complement of the constants $\langle 1\rangle^\perp$. To see this, we can write down the condition on the orthogonal complement of the kernel. There exists $\lambda> 0$ such that 
$$\lambda \norm{f-f_X}_{L^2(X,\mu)}\leq -(\A f, f)=\mathcal{E}(f).$$
Next, we apply the spectral theorem which represents $\A$ as a multiplication operator $a\leq 0$ on $L^2(X)$ with $a\leq -\lambda$ on the orthogonal complement of the constants. For $f$ with $\int f\dd\mu=0$, this yields $$\norm{S_tf}_{L^2(\mu)}\leq e^{-\lambda t}\norm{f_0}_{L^2(\mu)}\to 0$$
as $t\to \infty$. Therefore, solutions converge exponentially fast to the constants that are their mean values.

This, in turn, proves the irreducibility or ergodicity of $S_t$ following the Definitions~\ref{def:irreducible} and \ref{def:ergodic}:

\begin{proof}[Proof of Theorem~\ref{mainThm}]
    First, we prove the ergodicity. If $E$ satisfies $T_t^{-1}(E)= E$ then we also have 
    $$T_t(E)=T_t(T_t^{-1}(E))\subseteq E.$$
    By monotonicity, we know that if $A\subseteq B$ then
    $$T_t(A)\subseteq T_t(B).$$
    This holds as $T_t$ is monotone since $S_t$ is linear and monotone (positive) as shown now. By \cite[Theorem 1.4.1]{fukushima2010dirichlet}, $S_t$ is Markovian, as it is the semigroup associated with a Dirichlet form, that is, if $0\leq f\leq 1$ then $0\leq S_hf\leq 1$. Thus, by linearity of $S_t$, it is positivity preserving on bounded function and therefore monotone.
    
    By the upper bound ($S_t\chi_A\leq 1$ a.e.), $S_s\chi_A\leq T_s(A)$ and thus
    $$T_{t+s}(A)\subseteq T_t\circ T_s(A).$$
    If $\mu(E)>0$, we have $T_t(E)\to X$ as $t\nearrow \infty$ by the observation that all functions converge to their mean as a constant function under the flow. If $T_t(E)\subseteq E$ for $t>0$ then 
    $$T_{2t}(E)\subseteq T_t\circ T_t(E)\subseteq T_t(E)\subseteq E.$$ 
    Inductively this implies that a.e.\ point of $X$ is contained in $E$ and thus $\mu(E^c)=0$.

    To prove irreducibility, we can plug in $f\equiv 1$ and get $S_h(\chi_E)=\chi_E$ $\mu$-a.e.\ which implies $T_h(E)=E$ a.e.
\end{proof}

\section{Generalizations}
\label{Generalization}

In this section, we show how the proof and techniques of Section~\ref{manifoldSetting} can be generalized to the case of more than two connected manifolds. Moreover, we give an idea on how to incorporate more general weights.

\begin{theorem}[Heat flow on the glued manifold II]
\label{Thm:GenMainThmHeatGlued}
    Let $(X,d,\mu)$ be a metric space with $X=\bigcup\limits_{i=1}^m M_i$ being a union of smooth, bounded, compact, connected, Riemannian manifolds whose intersection $L_{ij}\coloneqq M_i\cap M_j$ are disjoint, transversal, nonempty and lie in the inner of both manifolds. Equip $X$ with the glued distance $d$ and assume that $X$ is path-wise connected. Furthermore, let $$\mu\coloneqq \sum\limits_{i=1}^m \omega_i\cdot\text{vol}_{M_i}$$
    fulfill $\mu(L_{ij})=0$ and be locally $N_{ij}$-doubling.
    
    Moreover, assume nonnegative weights $\omega_i\in A_2(M_i),\omega_i>0$ such that they are bounded in a neighborhood of $\partial M_i$ and on each intersection $L\in\{L_{ij}\}$ the following holds for one of both manifolds:
    \begin{itemize}
        \item $\mu_i$ satisfies the $L_{ij}$-Muckenhoupt type condition, see Definition~\ref{Def:LMucken},
        \item $\int\limits_0^R \frac{N_{ij}(\rho)\rho}{\mu_i(L_\rho)}\dd \rho,\int\limits_0^R \frac{\rho}{\mu_i(L_\rho)}\dd \rho<\infty$.
    \end{itemize}

    Then, the canonical heat flow is ergodic and irreducible.
\end{theorem}
\begin{remark*}
    In Example~\ref{Ex:generalWeights}, we give a class of measures that satisfy the $L$-Mucken\-houpt condition. The other conditions are required for the capacity to be positive, and thus for the connectedness of the manifolds. These can be read intuitively as the fact that the effective codimension of the intersection must be strictly between $0$ and $2$ and that the jump in the effective dimension cannot be too large.
\end{remark*}

As in the case of two manifolds, we still rely on the equivalence of the capacities to get a trace in our Sobolev space. Thus, the effective dimension (taking into account the weight) of the manifolds at their intersection cannot vary too much. In terms of heuristically speaking, the difference of $k$ and the effective dimension minus one must be less than one. We first prove the case of two manifolds with general weights, as in Theorem~\ref{Thm:GenMainThmHeatGlued}, intersecting, and then discuss that all the proofs carry over to the case of disjoint intersections.

We begin with lower bounds. For general weights $\omega_1, \omega_2\in A_2$ on $M_i$ with $\text{dim}(M_i)=n_i$, we can follow the same calculations as above to obtain lower bounds on the capacities:

\begin{align*}
    {\Capp}_{2,\mu_1}(L, L_R)&\gtrsim \left(N(R)R^{-2(n_2-k-1)}\frac{\LL^{n_2}}{\omega_2}(L_R)\right.\\
    &\quad\left.+2(n_1-k-1)\int\limits_0^R N(\rho)\rho^{-(2n_2-2k-1)}\frac{\LL^{n_2}}{\omega_2}(L_\rho)\dd \rho\right)^{-1},\\
    {\Capp}_{2,\mu_2}(L, L_R)&\gtrsim \left(R^{-2(n_2-k-1)}\frac{\LL^{n_2}}{\omega_2}(L_R)\right.\\
    &\quad\left.+2(n_2-k-1)\int\limits_0^R \rho^{-(2n_2-2k-1)}\frac{\LL^{n_2}}{\omega_2}(L_\rho)\dd \rho\right)^{-1}.
\end{align*}

Additionally, if $\omega_2$ satisfies the $L$-Muckenhoupt type condition on the intersection, see Definition~\ref{Def:LMucken}, we can simplify these expressions:

\begin{align*}
    {\Capp}_{2,\mu_1}(L, L_R)&\gtrsim \left(N(R)R^2\mu_2(L_R)^{-1}+2(n_1-k-1)\int\limits_0^R N(\rho)\rho\mu_2(L_\rho)^{-1}\dd \rho\right)^{-1},\\
    {\Capp}_{2,\mu_2}(L, L_R)&\gtrsim \left(R^2\mu_2(L_R)^{-1}+2(n_2-k-1)\int\limits_0^R \rho\mu_2(L_\rho)^{-1}\dd \rho\right)^{-1}.
\end{align*}

Moreover, we can prove an upper capacity bound, which shows that our lower bounds are optimal up to a constant. This upper bound on the capacity can be obtained through an explicit competitor. First, consider $L_r$ in $L_{2r}$ and the linear competitor $u(x)=\left(1-\frac{\dist(x,L_r)}{r}\right)\chi_{L_{2r}}\in \Lip\cap C_c(L_{2r})$:
\begin{align*}
    {\Capp}_{2,\mu_2}(L_r, L_{2r})&\lesssim \int\limits_{L_{2r}\setminus L_r}r^{-2}\dd\mu_2\\
    &\leq \int\limits_{L_{2r}}r^{-2}\dd\mu_2\\
    &\lesssim r^{2n_2-2k-2}\left(\int\limits_{L_{2r}}\frac{\dd\LL^{n_2}}{\omega_2}\right)^{-1}\\
    &\leq r^{2n_2-2k-2}\left(\int\limits_{L_{2r}\setminus L_r}\frac{\dd\LL^{n_2}}{\omega_2}\right)^{-1}\\
    &\leq \left(\int\limits_{L_{2r}\setminus L_r}\frac{1}{\dist(y,L)^{2(n_2-k-1)}}\frac{\dd\LL^{n_2}}{\omega_i}\right)^{-1}.
\end{align*}
This can be used iteratively to compute the capacity. With $l\in\mathbb{N}$ such that $2^{l-1}r\leq R\leq 2^lr$, we get:
\begin{align*}
    {\Capp}_{2,\mu_2}(L_r, L_R)&\lesssim {\Capp}_{2,\mu_2}(L_r, L_{2^lr})\\
    &\leq \left(\sum\limits_{j=0}^{l-1} {\Capp}_{2,\mu_2}(L_{2^jr}, L_{2^{j+1}r})^{-1}\right)^{-1}\\
    &\leq \left(\int\limits_{L_{2^lr}\setminus L_r}\frac{1}{\dist(y,L)^{2(n_2-k-1)}}\frac{\dd\LL^{n_2}}{\omega_2}\right)^{-1}\\
    &\leq \left(\int\limits_{L_{R}\setminus L_r}\frac{1}{\dist(y,L)^{2(n_2-k-1)}}\frac{\dd\LL^{n_2}}{\omega_2}\right)^{-1}.
\end{align*}
The proof of the first inequality is analogous to~\cite[Lemma 2.16]{heinonen2018nonlinear}.
With the simplification of Section~\ref{manifoldSetting},
\begin{align*}
    {\Capp}_{2,\mu_2}(L, L_R)&\lesssim \left( R^{-2(n_2-k-1)}\frac{\LL^{n_2}}{\omega_2}(L_R)\right.\\
    &\quad\quad\left.+2(n_2-k-1)\int\limits_0^R \rho^{-(2n_2-2k-1)}\frac{\LL^{n_2}}{\omega_2}(L_\rho)\dd \rho\right)^{-1}.
\end{align*}

With $\omega_2$ satisfying the Muckenhoupt type condition, this yields
\begin{align*}
    {\Capp}_{2,\mu_2}(L, L_R)&\lesssim \left(R^2\mu_2(L_R)^{-1}+2(n_2-k-1)\int\limits_0^R \rho\mu_2(L_\rho)^{-1}\dd \rho\right)^{-1}.
\end{align*}

Now, we can conclude that the capacities are positive under the assumptions of Theorem~\ref{Thm:GenMainThmHeatGlued}. By assumption and since $R>0$ is fixed, we have $$N(R)R^2\mu_2(L_R)^{-1}, R^2\mu_2(L_R)^{-1}<\infty.$$
Additionally, we assumed $\int\limits_0^R \frac{N(\rho)\rho}{\mu_2(L_\rho)}\dd \rho,\int\limits_0^R \frac{\rho}{\mu_2(L_\rho)}\dd \rho<\infty$
and thus by our lower bound, we get 
$${\Capp}_{2,\mu_1}(L, L_R),{\Capp}_{2,\mu_2}(L, L_R)>0.$$

\begin{example}
\label{Ex:generalWeights}
    The following example demonstrates the effective dimension condition when the weight is locally a rational function around the intersection. Additionally, it shows that the $L$-Muckenhoupt type condition is reasonable.
    
    We consider a submanifold $L$ in a manifold $M$ with $n\coloneqq \dim M$ and $k\coloneqq \dim L\neq n-1$ with weight $\omega$ which locally around $L$ has the form $$\omega(x)\coloneqq \frac{1}{\dist(x,L)^\alpha}.$$ 
    Then locally, 
    \begin{align*}
        \mu_i(L_r)&\simeq\begin{cases}
            \frac{r^{n-k-\alpha}}{n-k-\alpha} &\text{if }\alpha<n-k,\\
            \infty &\text{else,}
        \end{cases}\\
        \frac{\LL^{n}}{\omega}(L_r)=\int\limits_{L_r}\frac{1}{\omega}\dd x&\simeq\begin{cases}
            \frac{r^{n-k+\alpha}}{n-k+\alpha} &\text{if }\alpha>-(n-k),\\
            \infty &\text{else.}
        \end{cases}
    \end{align*}
    Thus, as $\mu$ needs to be locally finite and the capacity needs to be finite, we require $\alpha\in (-(n-k),n-k)$. In this case, the $L$-Muckenhoupt type condition is satisfied and the requirement for the capacities becomes
    \begin{align*}
        \int\limits_0^R \rho\mu_i(L_\rho)^{-1}\dd \rho&=\int\limits_0^R \rho^{1-n+k+\alpha}\dd \rho<\infty,
    \end{align*}
    i.e., $\alpha>n-k-2$, which coincides with our intuition that the difference of $k$ and effective dimension minus one, $n-\alpha-1$, has to be less than one.
    
    If $L$ is the intersection with another manifold and the whole measure $\mu$ satisfies the $N$-doubling condition, the condition for the other manifold is
    \begin{align*}
        \int\limits_0^R N(\rho)\rho^{1-n+k+\alpha}\dd \rho<\infty.
    \end{align*}
\end{example}

Next, we describe the iteration procedure for more than two manifolds. We define the Dirichlet form analogously to before
$$\E(f)\coloneqq \int\limits_X|\nabla f|^2\dd\mu\coloneqq \sum\limits_{i=1}^m \int\limits_{M_i}|\nabla^{M_i} f|^2\dd\mu_i.$$

In this setting, we can use the same proofs as before since the calculations for the capacities are done locally around the intersections and the other proofs are done analogously for a finite number of manifolds. Therefore, we still can use that $(X,d)$ is a metric doubling space to prove the compact embedding. We need to use the connectedness to conclude that the kernel again consists only of the constants. Then, the capacity conditions, the ergodicity and irreducibility follow analogously. 

Moreover, we can treat other boundary conditions and manifolds in a similar fashion. If we additionally have the existence of a heat kernel with suitable upper and lower exponential bounds, we can use the localization result in metric measure spaces, see~\cite[Theorem 3.22]{post2018locality}, and the semigroup property to extend our results to more general settings. This is due to the fact an everywhere positive heat kernel implies ergodicity and irreducibility. In this setting, only the local geometry matters and thus, different boundary conditions can be treated as well.

\section{Outlook}
\label{Outlook}

In the following, we give the notation and definitions needed to define a non-local perimeter functional, the heat excess $E_h$. An interesting question is whether this functional converges to the perimeter as $h$ goes to zero. The correct notion of convergence for this is the one of $\Gamma$-convergence, cf.~Definition~\ref{Def:Gamma-Convergence}. This convergence is proven in the context of Euclidean space, see~\cite{EsedogluOttoThresh}, and smooth manifolds, see~\cite{jonatim}, but it is unknown in our setting or for general metric measure spaces. In this context, it is known that the limit leads to an equivalent characterization of $\BV$ under assumptions on the measure; cf.~\cite{ruiz2021gagliardo, ruiz2020heat, alonso2018bv, alonsoruiz2020besov, alonso2021besov, Marola_2016}.

We begin with the definition of $\Gamma$-convergence.

\begin{definition}[$\Gamma$-convergence]
    \label{Def:Gamma-Convergence}
    Let $X$ be a topological space and $(F_n)_{n\in\mathbb{N}}$ be a sequence of functionals $F_n:X\to\mathbb{R}\cup\{\infty\}$.
    Then, we say that $(F_n)_{n\in\mathbb{N}}$ $\Gamma$-converges to a functional $F:X\to\mathbb{R}\cup\{\infty\}$, written $F_n\overset{\Gamma}{\to}F$ if
    \begin{itemize}
        \item for all $x_n\to x, x_n,x\in X$ we have the liminf inequality
        $$\liminf\limits_n F_n(x_n)\geq F(x),$$
        \item for all $x\in X$ there exists a recovery sequence $x_n\to x, x_n\in X$ such that 
        $$\lim\limits_n F_n(x_n)= F(x).$$
    \end{itemize}
\end{definition}

\begin{remark*}
    The second property is often written as the equivalent formulation of a limsup inequality $$\limsup\limits_n F_n(x_n)\leq F(x).$$
\end{remark*}

We define the perimeter functional for a function $f\in L^1(X,\mu)$ in accordance with~\cite{Marola_2016} as:
$$\norm{\nabla f}(U)\coloneqq \liminf\left\{\norm{\nabla f_n}(U): f_n\to f \text{ in } L^1(X), f_n\in \Lip(X) \right\}.$$

Additionally, we define the heat excess at time $h>0$ via
$$E_h(f)\coloneqq \int\limits_X S_h|f(x)-f(\cdot)|(x)\dd\mu(x).$$
If the heat semigroup $S_h$ possesses a heat kernel $p_h(x,y)$, we can rewrite it as
$$E_h(f)\coloneqq \int\limits_X\int\limits_X p_h(x,y)|f(x)-f(y)|\dd\mu(y)\dd\mu(x).$$ 
If $f=\chi_U$ is a characteristic function, it becomes 
$$E_h(f)=\int\limits_X\chi_{U^c}S_h\chi_U\dd\mu(x).$$ 
The heat excess can also be seen as a Korevaar-Schoen type energy. Thus, the convergence is related to the Bourgain-Brezis-Mironescu formula, cf.~\cite{gorny2020bourgainbrezismironescu}.

An interesting question is whether the following conjecture holds in general or even if it holds in the setting of glued manifolds with $A_1$ weights and doubling measure.
\begin{conjecture*}[Perimeter functional]
   Let $(X,d,\mu)$ be a complete metric space with $(\E, D(\E))$ a strictly local regular Dirichlet form. Moreover, assume that $\mu$ satisfies an $1$-Poincar\'e inequality and is doubling. Then, for $f\in BV(X)$:
   $$E_h(f)\overset{L^1-\Gamma}{\longrightarrow} \norm{\nabla f}(X).$$
\end{conjecture*}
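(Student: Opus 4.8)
The plan is to run the standard two-part $\Gamma$-convergence argument — a $\liminf$ inequality valid for every approximating sequence together with the construction of a recovery sequence — after first reducing from general $f\in BV(X)$ to characteristic functions of sets of finite perimeter by a coarea argument. The only analytic input needed is the heat kernel: under the stated hypotheses (doubling plus a Poincaré inequality) the Grigor'yan--Saloff-Coste--Sturm theory gives a jointly continuous kernel $p_h$ with two-sided Gaussian bounds and a gradient estimate,
\begin{align*}
\frac{c_1}{\mu(B_{\sqrt h}(x))}e^{-\frac{d(x,y)^2}{c_2 h}}\leq p_h(x,y)\leq \frac{c_3}{\mu(B_{\sqrt h}(x))}e^{-\frac{d(x,y)^2}{c_4 h}},\qquad |\nabla S_{h}g|\leq \frac{C}{\sqrt h}\norm{g}_\infty .
\end{align*}
Everything else is measure-theoretic. (Throughout, the time-renormalization making the limit equal to the classical perimeter is understood to be absorbed in the definition of $S_h$, or equivalently in a factor $1/\sqrt h$; see the obstacle below.)

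\emph{Reduction to sets.} By the layer-cake identity $|f(x)-f(y)|=\int_{\R}|\mathds{1}_{\{f>t\}}(x)-\mathds{1}_{\{f>t\}}(y)|\dd t$ and Fubini one gets $E_h(f)=\int_{\R}E_h(\mathds{1}_{\{f>t\}})\dd t$, and Miranda's coarea formula in doubling-Poincaré spaces gives $\norm{Df}(X)=\int_{\R}\norm{D\mathds{1}_{\{f>t\}}}(X)\dd t$ for $f\in BV(X)$. Hence it suffices to prove the $\Gamma$-convergence on characteristic functions of sets of finite perimeter and then lift it, using Fatou for the $\liminf$ and dominated convergence for the $\limsup$.

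\emph{The two inequalities.} For the recovery sequence one takes $f_h=\mathds{1}_U$ constant in $h$ and must show $\limsup_{h\to 0}E_h(\mathds{1}_U)\leq\norm{D\mathds{1}_U}(X)$. Writing $E_h(\mathds{1}_U)=\int_U\int_{U^c}p_h(x,y)\dd\mu(y)\dd\mu(x)$, the Gaussian upper bound forces the integrand to be exponentially small unless $\dist(x,\partial^*U)\lesssim\sqrt h$, so the integral is controlled by the $\mu$-mass of the $\sqrt h$-neighbourhood of the essential boundary, which by the Minkowski-content/coarea estimates available under doubling and the $1$-Poincaré inequality behaves like $\sqrt h$ times the perimeter of $U$; making the constant sharp requires approximating $U$ in the strict $BV$ topology by sets whose boundary is $(n-1)$-rectifiable and flat at small scales and comparing with the heat content of a half-space. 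For the $\liminf$, given $f_h\to f$ in $L^1$ with $\liminf_h E_h(f_h)<\infty$, one shows $f\in BV(X)$ and bounds $\norm{Df}(X)$ from below via the dual characterization of perimeter through the Dirichlet form: integrate by parts against $S_h f_h$, split $S_h=S_{h/2}S_{h/2}$, use the gradient bound and stochastic completeness of $S_{h/2}$, pass to the limit against a compactly supported Lipschitz test field, and upgrade the resulting mass bound to a measure bound by De Giorgi--Letta localization. This direction should go through essentially as in the metric-measure-space literature (Marola--Miranda--Shanmugalingam, Alonso-Ruiz et al.).

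\emph{Main obstacle.} The genuine difficulty — and the reason this is only a conjecture — is the sharp constant in the $\limsup$ for a space of \emph{varying} dimension. On a smooth $n$-manifold the heat content across a distance $\sqrt h$ of a hypersurface carries a universal factor depending only on the single integer $n$, but along the singular stratum $L$ of the glued space the two sheets $M_1$ and $M_2$ have different dimensions, and near $L$ the weighted model (see the local examples in Section~\ref{Generalization}) gives yet another effective dimension, so there is no single normalizing function of $h$ making the limit equal to the classical perimeter simultaneously on $M_1\setminus L$, on $M_2\setminus L$, and across $L$. Understanding, or correctly reinterpreting, the contribution of $L$ — which satisfies $\mu(L)=0$ yet, by the capacity equivalence proved in Section~\ref{Capacity}, has positive $2$-capacity and is therefore thin but not negligible for a codimension-one functional — is the heart of the matter, and is precisely what would have to be settled to turn the conjecture into a theorem.
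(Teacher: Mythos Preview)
The paper does not prove this statement: it is explicitly labelled a \emph{conjecture} and appears in the Outlook section with the phrasing ``An interesting question is whether the following conjecture holds true in general or even whether it is true in the setting of glued manifolds with $A_1$ weights and doubling measure.'' There is therefore no proof in the paper to compare your proposal against.

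Your write-up is not really a proof either, and you say as much: you outline the standard two-step $\Gamma$-convergence strategy (coarea reduction to sets, Gaussian bounds for the $\limsup$, duality/gradient estimate for the $\liminf$) and then correctly isolate the genuine obstruction, namely that in a space of varying local dimension no single $h$-dependent normalization can make the heat content match the perimeter uniformly across strata. That diagnosis is sound and is consistent with the paper's decision to leave the statement open; the paper likewise flags that passing from the $L^2$ theory used in the main theorem to the $L^1$/$A_1$/$\Capp_{1,\mu}$ framework required here is not automatic. One further point worth noting: as written in the paper, $E_h(f)$ carries no $1/\sqrt h$ factor, so taken literally the $\Gamma$-limit would be zero; you already flag this, but it means the conjecture itself needs a precise formulation before any of your outline can be made rigorous.
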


Given this convergence, we can construct a classifier by considering a minimizing movement scheme similar to the thresholding scheme (MBO-scheme) to receive a fast iterative algorithm to classify high-dimensional data. 
Schemes like these are analyzed in \cite{medianfilter}.

In proving the $\Gamma$-convergence of the heat excess $E_h(f)\overset{\Gamma}{\to}\norm{\nabla f}$ in $L^1$, it is necessary for $
\mu$ to satisfy a $1$-Poincar\'e inequality (or similar condition on the measure $\mu$).
\begin{definition}[$p$-Poincar\'e inequality]
\label{Def:pPoincar\'e}
    We say that $(X,d,\mu)$ satisfies a $(q,p)$-Poincar\'e inequality if there exists a constant $c>0$ and a dilation factor $\lambda\geq 1$ such that for all integrable $f$ and for each ball $B$ of radius $r$:
    \begin{align*}
        \left(\fint_B |f-f_B|^q\dd\mu\right)^\frac{1}{q}&\leq cr\left(\fint_{\lambda B} |\nabla f|^p\dd\mu\right)^\frac{1}{p}.
    \end{align*}
    If $p=q$, we call this condition also a $p$-Poincar\'e inequality.
\end{definition}
\begin{remark*}
    If $X$ satisfies a $(q,p)$-Poincar\'e inequality, then we also have a $(q',p')$-Poincar\'e inequality for $1\leq q'\leq q, p\leq p'$. Moreover, we can also recover a $(p,p)$-Poincar\'e inequality in doubling spaces, cf.~\cite[Theorem 5.1]{SobolevMetPoincare}.
\end{remark*}

In the setting of glued manifolds, for an inequality $1$-Poincar\'e to hold throughout the space, it is necessary that the components satisfy the inequality and a $1$-capacity condition must be satisfied at the intersections; cf.~\cite{BowTies}.

\begin{definition}[$p$-Capacities]
\label{Def:p-Capacity}
    The relative $(p,\mu)$-capacity is defined on compact sets and extended to measurable sets in the usual way:
    \begin{align*}
        {\Capp}_{p,\mu}(K,\Omega)\coloneqq \inf\left\{\int\limits_\Omega |\nabla u|^p\dd\mu:u\in \Lip\cap C_c(\Omega), u\geq \chi_K\right\}.
    \end{align*}
\end{definition}

Moreover, with $1$-Muckenhoupt weights, the $1$-Poincar\'e inequality is satisfied in the individual manifolds.

\begin{definition}[$p$-Muckenhoupt weights]
    \label{Def:p-Muckenhoupt}
    We say $\omega$ is in the class $A_p(M)$ of $p$-Muckenhoupt weights on a smooth compact manifold $M$ for $p>1$ if for all balls $B$
    $$\fint\limits_B \omega\dd\vol_{M}\cdot \left(\fint\limits_B \frac{1}{\omega^{\frac{1}{p-1}}}\dd\vol_{M}\right)^{p-1}\lesssim 1$$
    and $\omega\in A_1(M)$ if
    $$\fint\limits_B \omega\dd\vol_{M}\simeq \essinf_B \omega.$$
\end{definition}

\begin{remark*}
    The spaces of Muckenhoupt weights are nested, i.e., $A_1\subseteq A_p\subseteq A_q$ for $1<p<q$, cf.~\cite[Proposition A.1]{AuscherMartellII}.
\end{remark*}

We can now ask the question of the $\Gamma$- or Mosco-convergence of $E_h$ to $\norm{\nabla f}$. To this end, different inequalities as well as bounds for the heat kernel are interesting. We note that for this one has to consider the $A_1$ weights and $1$-Poincar\'e inequalities instead of the $L^2$ variants. Moreover, to carry over these inequalities from the components to the whole space, one needs more refined geometric assumptions on the intersection related to ${\Capp}_{1,\mu}$. 

\section{Related work}
\label{background}

It is not feasible to give an extensive overview of the heat equation, Brownian motion, heat kernel estimates, and flows on manifolds and metric measure spaces. Thus, we list some of the directly connected more recent works in these areas that are relevant to this paper and our setting.

In this work, we consider the setting of glued manifolds with weights. This setting is motivated by manifold learning, and examples of similar approaches were studied before in the works we list next. There are different approaches that one could follow to connect manifolds. In \cite{chen2019brownian, Takumud1d2} the space $\mathbb{R}^2\times \mathbb{R}_+$ with a collapsed center is analyzed and an expansion of the heat kernel and Brownian motion is computed. Similarly, \cite{Shuwen3D, Shuwen3D2} study the so-called distorted Brownian motion in $\mathbb{R}^3\times \mathbb{R}_+$ where the connection is done via a density that fulfills our conditions. Using the same setting, in \cite{Takumud1d2}, Ooi extends the results of \cite{chen2019brownian} to the case of two arbitrary $\mathbb{R}^{n_1},\mathbb{R}^{n_2}$ meeting at one collapsed point with explicit heat kernel bounds. The works \cite{BowTies, christensen2023capacities} study bow ties as special structures and how Poincar\'e inequalities on them can be obtained from capacity conditions. In \cite{paulik2005gluing}, glued spaces with constant dimension are analyzed. A major difference is that only manifolds of the same dimension are considered and they do not possess a density.

In a more recent work than this paper, namely \cite{BungertSlepcevDirichletEnergies}, the authors consider Dirichlet energies on intersection manifolds of potentially different dimensions with bounded densities. They derive a formulation that respects the intrinsic dimensions without intrinsic knowledge of them. In accordance with this paper, they obtain the result that the energies separate if the jump in dimension is too high. Furthermore, the energy connects the spaces via a flow condition of the derivative in the case that the manifolds are of the same dimension and intersect in a manifold that has codimension one in them.

The theory of Muckenhoupt weights and related study of them can be found in various papers and books. The main sources for this work were \cite[Chapter 2 and 15]{heinonen2018nonlinear}, \cite{kilpelainen1994weighted} and \cite[Chapter 5]{stein1993harmonic}, where they are introduced and studied together with the related capacities. For the setting of metric measure spaces, one can see \cite{kurki2022extension} or, in the context of non-doubling measures, \cite{Yasuo}.

In their series of four papers~\cite{AuscherMartellI, AuscherMartellII, AuscherMartellIII, AuscherMartellIV}, the authors investigate, among other things, Muckenhoupt weights on manifolds, where the appendix in \cite{AuscherMartellII} is especially noteworthy.

Related to our construction is the theory of Dirichlet forms and Markov processes in \cite{fukushima2010dirichlet} and linear operators in \cite{kato2013perturbation} as well as capacities, see~\cite{kilpelainen1994weighted, kinnunen1996sobolev}.

The theory of heat flows and kernels with their estimates is extensive and is contained in many fields. In \cite{grigor2009heat}, Grigor'yan and co-authors provide a condition for the reverse doubling condition on manifolds and prove connections between measure doubling conditions and heat kernel estimates. In \cite{Grigoryan}, he provides a general theory of the upper and lower bounds of heat kernels and how to achieve these in a general setting.
Other estimates of heat kernels by Grigor'yan are proven in \cite{grigor2006heat, grigoryan2009heat, grigor2022off, grigor2014heat, grigor2012two}.

For the heat kernel, in \cite{sturm1995analysis} and \cite{sturm1994analysis}, Sturm shows bounds under the doubling assumption on the measure.
For the bounds on heat kernels, \cite{carlen1987upper} provides theorems that imply an upper bound given a Sobolev inequality of Nash type. This Nash inequality in manifolds can be proven using \cite{kumura2001nash} where a proof is given for weighted manifolds which fit our setting. Other Nash inequalities can be found in \cite{bakry2010weighted, kigami2004local}.

In \cite{kumagai2005construction}, a heat flow and Brownian motion are constructed from Dirichlet forms which arise as limits of discrete forms.
For the expansion of the heat kernel on manifolds with smooth weights, a parametrix construction is needed, which is done in \cite{rosenberg1997laplacian}. Even in a more general setting, the heat kernel is localized, which can be seen in the locality theorems and a comparison result in \cite{post2018locality}.
In \cite{bifulco2023lipschitz}, the Lipschitz continuity of heat kernels in metric measure spaces is investigated.
More bounds on the heat kernel can be found in \cite{BoutayebHeat, coulhonoff}.

Furthermore, in \cite{GrigoryanSaloffCosteConnected}, heat kernel bounds on connected manifolds are studied. For the setting, the authors consider same dimensional manifolds that potentially have different scalings in the long-time and large-scale behavior. These are connected through a common set so that a manifold with ends is created for whose heat kernel bounds are derived.

In Section~\ref{Definitions}, we work with Sobolev spaces that represent the domain of the Dirichlet form and the Laplace-Beltrami operator.
In \cite{hajlasz2003sobolev}, Haj{\l}asz defines different notions of Sobolev spaces, and in \cite{SobolevMetPoincare}, he proves, for instance, Sobolev embeddings of them. A short summary of Haj{\l}asz Sobolev-spaces is given in \cite{hajlasz1996sobolev}.
Generalizations of embeddings of Sobolev spaces can be found in \cite{gurka1988continuous, kalamajska1999compactness}. Here, the former analyzes the setting of embeddings in weighted Sobolev spaces, whereas the latter gives a generalization of the Rellich-Kondrachov embedding for the Haj{\l}asz Sobolev-space. This proof uses a similar criterion to our $N$-doubling condition.

For the classifier and definition perimeter functional, we need to define functions of bounded variations. These possess numerous different characterizations in metric spaces. 
The works \cite{buffa2021bv} and \cite{miranda2003functions} give an overview and characterization of different definitions.
Moreover, in \cite{ambrosio2016bmo} a BMO characterization of sets of finite perimeter is given.

The convergence of the heat excess has already been studied in different cases. The most relevant ones for this setting are \cite{ruiz2021gagliardo, ruiz2020heat, alonso2018bv, alonsoruiz2020besov, alonso2021besov, EsedogluOttoThresh, jonatim, Marola_2016}.
First, in \cite{EsedogluOttoThresh} the convergence in Euclidean space for the multiphase mean curvature flow setting is shown.
In \cite{jonatim}, the $\Gamma$-convergence was proven in the setting of smooth manifolds.
\cite{ruiz2021gagliardo} as well as \cite{alonso2018bv}, \cite{alonsoruiz2020besov} and \cite{alonso2021besov} use a weak Bakry-\'Emery condition in the setting of metric measure spaces with a $2$-Poincar\'e inequality to establish the equivalence of the limit with the BV seminorm.
In comparison, \cite{Marola_2016} and \cite{ruiz2020heat} prove similar results using a $1$-Poincar\'e inequality. The latter is in the setting of smooth manifolds where pointwise convergence is shown.

Similarly to this question, one can ask for a Bourgain-Brezis-Mironescu formula in metric measure spaces, i.e., whether a nonlocal kernel perimeter converges to the perimeter.
This was investigated in \cite{di2019new, gorny2020bourgainbrezismironescu, han2021asymptotic, kreuml2019fractional, lahti2023bv, lahti2024characterization}.

\section{Acknowledgement}

I would like to thank my supervisor Prof.\ Felix Otto for many fruitful discussions and help in this work.
Many thanks also to Prof.\ Tim Laux for the discussions we had.

\frenchspacing
\bibliographystyle{abbrv}
\bibliography{References.bib} 

\begin{thebibliography}{10}

\bibitem{ruiz2021gagliardo}
P.~Alonso~Ruiz and F.~Baudoin.
\newblock {Gagliardo-Nirenberg}, {Trudinger}-{Moser} and {Morrey} inequalities on {Dirichlet} spaces.
\newblock {\em J. Math. Anal. Appl.}, 497(2):27, 2021.
\newblock Id/No 124899.

\bibitem{ruiz2020heat}
P.~Alonso~Ruiz and F.~Baudoin.
\newblock Yet another heat semigroup characterization of {BV} functions on {Riemannian} manifolds.
\newblock {\em Ann. Fac. Sci. Toulouse, Math. (6)}, 32(3):577--606, 2023.

\bibitem{alonso2018bv}
P.~Alonso-Ruiz, F.~Baudoin, L.~Chen, L.~Rogers, N.~Shanmugalingam, and A.~Teplyaev.
\newblock {BV} functions and {Besov} spaces associated with {Dirichlet} spaces.
\newblock Preprint, {arXiv}:1806.03428 [math.{FA}] (2018), 2018.

\bibitem{alonsoruiz2020besov}
P.~Alonso-Ruiz, F.~Baudoin, L.~Chen, L.~Rogers, N.~Shanmugalingam, and A.~Teplyaev.
\newblock Besov class via heat semigroup on {Dirichlet} spaces. {II}: {BV} functions and {Gaussian} heat kernel estimates.
\newblock {\em Calc. Var. Partial Differ. Equ.}, 59(3):32, 2020.
\newblock Id/No 103.

\bibitem{alonso2021besov}
P.~Alonso-Ruiz, F.~Baudoin, L.~Chen, L.~Rogers, N.~Shanmugalingam, and A.~Teplyaev.
\newblock Besov class via heat semigroup on {Dirichlet} spaces. {III}: {BV} functions and sub-{Gaussian} heat kernel estimates.
\newblock {\em Calc. Var. Partial Differ. Equ.}, 60(5):38, 2021.
\newblock Id/No 170.

\bibitem{ambrosio2016bmo}
L.~Ambrosio, J.~Bourgain, H.~Brezis, and A.~Figalli.
\newblock {BMO}-type norms related to the perimeter of sets.
\newblock {\em Commun. Pure Appl. Math.}, 69(6):1062--1086, 2016.

\bibitem{AuscherMartellI}
P.~Auscher and J.~M. Martell.
\newblock Weighted norm inequalities, off-diagonal estimates and elliptic operators. {I}: {General} operator theory and weights.
\newblock {\em Adv. Math.}, 212(1):225--276, 2007.

\bibitem{AuscherMartellII}
P.~Auscher and J.~M. Martell.
\newblock Weighted norm inequalities, off-diagonal estimates and elliptic operators. {II}: {Off}-diagonal estimates on spaces of homogeneous type.
\newblock {\em J. Evol. Equ.}, 7(2):265--316, 2007.

\bibitem{AuscherMartellIII}
P.~Auscher and J.~M. Martell.
\newblock Weighted norm inequalities, off-diagonal estimates and elliptic operators. {III}: {Harmonic} analysis of elliptic operators.
\newblock {\em J. Funct. Anal.}, 241(2):703--746, 2007.

\bibitem{AuscherMartellIV}
P.~Auscher and J.~M. Martell.
\newblock Weighted norm inequalities, off-diagonal estimates and elliptic operators. {IV}: {Riesz} transforms on manifolds and weights.
\newblock {\em Math. Z.}, 260(3):527--539, 2008.

\bibitem{bakry2010weighted}
D.~Bakry, F.~Bolley, I.~Gentil, and P.~Maheux.
\newblock Weighted {Nash} inequalities.
\newblock {\em Rev. Mat. Iberoam.}, 28(3):879--906, 2012.

\bibitem{bifulco2023lipschitz}
P.~Bifulco and D.~Mugnolo.
\newblock On the {Lipschitz} continuity of the heat kernel.
\newblock Preprint, {arXiv}:2307.08889 [math.{FA}] (2023), 2023.

\bibitem{BowTies}
A.~Bj{\"o}rn, J.~Bj{\"o}rn, and A.~Christensen.
\newblock Poincar{\'e} inequalities and {{\(A_p\)}} weights on bow-ties.
\newblock {\em J. Math. Anal. Appl.}, 539(1):28, 2024.
\newblock Id/No 128483.

\bibitem{BoutayebHeat}
S.~Boutayeb, T.~Coulhon, and A.~Sikora.
\newblock A new approach to pointwise heat kernel upper bounds on doubling metric measure spaces.
\newblock {\em Adv. Math.}, 270:302--374, 2015.

\bibitem{UnionManifold}
B.~C.~A. Brown, A.~L. Caterini, B.~L. Ross, J.~C. Cresswell, and G.~Loaiza-Ganem.
\newblock Verifying the union of manifolds hypothesis for image data, 2023.

\bibitem{buffa2021bv}
V.~Buffa, G.~E. Comi, and M.~j. Miranda.
\newblock On {BV} functions and essentially bounded divergence-measure fields in metric spaces.
\newblock {\em Rev. Mat. Iberoam.}, 38(3):883--946, 2022.

\bibitem{BungertSlepcevDirichletEnergies}
L.~Bungert and D.~Slep{\v{c}}ev.
\newblock Convergence of graph {Dirichlet} energies and graph {Laplacians} on intersecting manifolds of varying dimensions.
\newblock Preprint, {arXiv}:2509.24458 [math.{AP}] (2025), 2025.

\bibitem{carlen1987upper}
E.~A. Carlen, S.~Kusuoka, and D.~W. Stroock.
\newblock Upper bounds for symmetric {Markov} transition functions.
\newblock {\em Ann. Inst. Henri Poincar{\'e}, Probab. Stat.}, 23:245--287, 1987.

\bibitem{chen2019brownian}
Z.-Q. Chen and S.~Lou.
\newblock Brownian motion on some spaces with varying dimension.
\newblock {\em Ann. Probab.}, 47(1):213--269, 2019.

\bibitem{christensen2023capacities}
A.~Christensen.
\newblock {\em Capacities, Poincar{\'e} inequalities and gluing metric spaces}.
\newblock Linkopings Universitet (Sweden), 2023.

\bibitem{coulhonoff}
T.~Coulhon.
\newblock Off-diagonal heat kernel lower bounds without {Poincar{\'e}}.
\newblock {\em J. Lond. Math. Soc., II. Ser.}, 68(3):795--816, 2003.

\bibitem{di2019new}
S.~Di~Marino and M.~Squassina.
\newblock New characterizations of {Sobolev} metric spaces.
\newblock {\em J. Funct. Anal.}, 276(6):1853--1874, 2019.

\bibitem{EsedogluOttoThresh}
S.~Esedo{\=g}lu and F.~Otto.
\newblock Threshold dynamics for networks with arbitrary surface tensions.
\newblock {\em Commun. Pure Appl. Math.}, 68(5):808--864, 2015.

\bibitem{fukushima2010dirichlet}
M.~Fukushima, Y.~Oshima, and M.~Takeda.
\newblock {\em Dirichlet forms and symmetric {Markov} processes.}, volume~19 of {\em De Gruyter Stud. Math.}
\newblock Berlin: Walter de Gruyter, 2nd revised and extended ed. edition, 2011.

\bibitem{gorny2020bourgainbrezismironescu}
W.~G{\'o}rny.
\newblock {Bourgain-Brezis-Mironescu} approach in metric spaces with {Euclidean} tangents.
\newblock {\em J. Geom. Anal.}, 32(4):22, 2022.
\newblock Id/No 128.

\bibitem{Grigoryan}
A.~Grigor'yan.
\newblock Estimates of heat kernels on {Riemannian} manifolds.
\newblock In {\em Spectral theory and geometry. Proceedings of the ICMS instructional conference, Edinburgh, UK, 30 March--9 April, 1998}, pages 140--225. Cambridge: Cambridge University Press, 1999.

\bibitem{grigor2006heat}
A.~Grigor'yan.
\newblock Heat kernels on weighted manifolds and applications.
\newblock In {\em The ubiquitous heat kernel. AMS special session, Boulder, CO, USA, October 2--4, 2003}, pages 93--191. Providence, RI: American Mathematical Society (AMS), 2006.

\bibitem{grigoryan2009heat}
A.~Grigor'yan.
\newblock {\em Heat kernel and analysis on manifolds}, volume~47 of {\em AMS/IP Stud. Adv. Math.}
\newblock Providence, RI: American Mathematical Society (AMS); Somerville, MA: International Press, 2009.

\bibitem{grigor2022off}
A.~Grigor'yan, E.~Hu, and J.~Hu.
\newblock Off-diagonal lower estimates and {H{\"o}lder} regularity of the heat kernel.
\newblock {\em Asian J. Math.}, 27(5):675--770, 2023.

\bibitem{grigor2009heat}
A.~Grigor'yan, J.~Hu, and K.-S. Lau.
\newblock Heat kernels on metric spaces with doubling measure.
\newblock In {\em Fractal geometry and stochastics IV. Proceedings of the 4th conference, Greifswald, Germany, September 8--12, 2008}, pages 3--44. Basel: Birkh{\"a}user, 2009.

\bibitem{grigor2014heat}
A.~Grigor'yan, J.~Hu, and K.-S. Lau.
\newblock Heat kernels on metric measure spaces.
\newblock In {\em Geometry and analysis of fractals. Based on the international conference on advances of fractals and related topics, Hong Kong, China, December 10--14, 2012}, pages 147--207. Berlin: Springer, 2014.

\bibitem{GrigoryanSaloffCosteConnected}
A.~Grigor'yan and L.~Saloff-Coste.
\newblock Heat kernel on connected sums of {Riemannian} manifolds.
\newblock {\em Math. Res. Lett.}, 6(3-4):307--321, 1999.

\bibitem{grigor2012two}
A.~Grigor'yan and A.~Telcs.
\newblock Two-sided estimates of heat kernels on metric measure spaces.
\newblock {\em Ann. Probab.}, 40(3):1212--1284, 2012.

\bibitem{guillemin2010differential}
V.~Guillemin and A.~Pollack.
\newblock {\em Differential topology}.
\newblock Providence, RI: AMS Chelsea Publishing, reprint of the 1974 original edition, 2010.

\bibitem{gurka1988continuous}
P.~Gurka and B.~Opic.
\newblock Continuous and compact imbeddings of weighted {Sobolev} spaces. {I}.
\newblock {\em Czech. Math. J.}, 38(4):730--744, 1988.

\bibitem{hajlasz1996sobolev}
P.~Haj{\l}asz.
\newblock Sobolev spaces on an arbitrary metric space.
\newblock {\em Potential Anal.}, 5(4):403--415, 1996.

\bibitem{hajlasz2003sobolev}
P.~Haj{\l}asz.
\newblock Sobolev spaces on metric-measure spaces.
\newblock In {\em Heat kernels and analysis on manifolds, graphs, and metric spaces. Lecture notes from a quarter program on heat kernels, random walks, and analysis on manifolds and graphs, April 16--July 13, 2002, Paris, France}, pages 173--218. Providence, RI: American Mathematical Society (AMS), 2003.

\bibitem{SobolevMetPoincare}
P.~Haj{\l}asz and P.~Koskela.
\newblock {\em Sobolev met {Poincar{\'e}}}, volume 688 of {\em Mem. Am. Math. Soc.}
\newblock Providence, RI: American Mathematical Society (AMS), 2000.

\bibitem{han2021asymptotic}
B.-X. Han and A.~Pinamonti.
\newblock On the asymptotic behaviour of the fractional {Sobolev} seminorms in metric measure spaces: {Bourgain}-{Brezis}-{Mironescu}'s theorem revisited.
\newblock Preprint, {arXiv}:2110.05980 [math.{FA}] (2021), 2021.

\bibitem{heinonen2018nonlinear}
J.~Heinonen, T.~Kilpel{\"a}inen, and O.~Martio.
\newblock {\em Nonlinear potential theory of degenerate elliptic equations. {With} a new preface, corrigenda, and epilogue consisting of other new material}.
\newblock Dover Books Math. Mineola, NY: Dover Publications, reprint of the 2006 edition edition, 2018.

\bibitem{kalamajska1999compactness}
A.~Ka{\l}amajska.
\newblock On compactness of embedding for {Sobolev} spaces defined on metric spaces.
\newblock {\em Ann. Acad. Sci. Fenn., Math.}, 24(1), 1999.

\bibitem{kato2013perturbation}
T.~Kato.
\newblock {\em {Perturbation theory for linear operators}}, volume 132.
\newblock Springer Science \& Business Media, 2013.

\bibitem{kigami2004local}
J.~Kigami.
\newblock Local {Nash} inequality and inhomogeneity of heat kernels.
\newblock {\em Proc. Lond. Math. Soc. (3)}, 89(2):525--544, 2004.

\bibitem{kilpelainen1994weighted}
T.~Kilpel{\"a}inen.
\newblock Weighted {Sobolev} spaces and capacity.
\newblock {\em Ann. Acad. Sci. Fenn., Ser. A I, Math.}, 19(1):95--113, 1994.

\bibitem{kinnunen1996sobolev}
J.~Kinnunen and O.~Martio.
\newblock The {Sobolev} capacity on metric spaces.
\newblock {\em Ann. Acad. Sci. Fenn., Math.}, 21(2):367--382, 1996.

\bibitem{Yasuo}
Y.~Komori-Furuya.
\newblock A note on {Muckenhoupt} type weight classes on nondoubling measure spaces.
\newblock {\em Georgian Math. J.}, 18(1):131--135, 2011.

\bibitem{kreuml2019fractional}
A.~Kreuml and O.~Mordhorst.
\newblock Fractional {Sobolev} norms and {BV} functions on manifolds.
\newblock {\em Nonlinear Anal., Theory Methods Appl., Ser. A, Theory Methods}, 187:450--466, 2019.

\bibitem{kumagai2005construction}
T.~Kumagai and K.-T. Sturm.
\newblock Construction of diffusion processes on fractals, {{\(d\)}}-sets, and general metric measure spaces.
\newblock {\em J. Math. Kyoto Univ.}, 45(2):307--327, 2005.

\bibitem{kumura2001nash}
H.~Kumura.
\newblock Nash inequalities for compact manifolds with boundary.
\newblock {\em Kodai Math. J.}, 24(3):352--378, 2001.

\bibitem{kurki2022extension}
E.-K. Kurki and C.~Mudarra.
\newblock On the extension of {Muckenhoupt} weights in metric spaces.
\newblock {\em Nonlinear Anal., Theory Methods Appl., Ser. A, Theory Methods}, 215:20, 2022.
\newblock Id/No 112671.

\bibitem{lahti2023bv}
P.~Lahti, A.~Pinamonti, and X.~Zhou.
\newblock {BV} functions and nonlocal functionals in metric measure spaces.
\newblock {\em J. Geom. Anal.}, 34(10):34, 2024.
\newblock Id/No 318.

\bibitem{lahti2024characterization}
P.~Lahti, A.~Pinamonti, and X.~Zhou.
\newblock A characterization of {BV} and {Sobolev} functions via nonlocal functionals in metric spaces.
\newblock {\em Nonlinear Anal., Theory Methods Appl., Ser. A, Theory Methods}, 241:14, 2024.
\newblock Id/No 113467.

\bibitem{jonatim}
T.~Laux and J.~Lelmi.
\newblock Large data limit of the {MBO} scheme for data clustering: {{\( {{\Gamma}} \)}}-convergence of the thresholding energies.
\newblock {\em Appl. Comput. Harmon. Anal.}, 79:34, 2025.
\newblock Id/No 101800.

\bibitem{Shuwen3D}
L.~Li and S.~Lou.
\newblock Distorted {Brownian} motions on space with varying dimension.
\newblock {\em Electron. J. Probab.}, 27:32, 2022.
\newblock Id/No 72.

\bibitem{Shuwen3D2}
S.~Lou.
\newblock Explicit heat kernels of a model of distorted {Brownian} motion on spaces with varying dimension.
\newblock {\em Ill. J. Math.}, 65(2):287--312, 2021.

\bibitem{Marola_2016}
N.~Marola, M.~j. Miranda, and N.~Shanmugalingam.
\newblock Characterizations of sets of finite perimeter using heat kernels in metric spaces.
\newblock {\em Potential Anal.}, 45(4):609--633, 2016.

\bibitem{miranda2003functions}
M.~Miranda.
\newblock Functions of bounded variation on ``good'' metric spaces.
\newblock {\em J. Math. Pures Appl. (9)}, 82(8):975--1004, 2003.

\bibitem{BMCap}
P.~M{\"o}rters.
\newblock Sample path properties of {Brownian} motion.
\newblock https://www.math-berlin.de/images/stories/lecnotes\_moerters.pdf.
\newblock Accessed: 2025-03-07.

\bibitem{Takumud1d2}
T.~Ooi.
\newblock Heat kernel estimates on spaces with varying dimension.
\newblock {\em T{\^o}hoku Math. J. (2)}, 74(2):165--194, 2022.

\bibitem{ouhabaz2009analysis}
E.~M. Ouhabaz.
\newblock {\em Analysis of heat equations on domains}, volume~31 of {\em Lond. Math. Soc. Monogr. Ser.}
\newblock Princeton, NJ: Princeton University Press, 2005.

\bibitem{paulik2005gluing}
G.~Paulik.
\newblock {\em Gluing spaces and analysis}, volume 372 of {\em Bonn. Math. Schr.}
\newblock Bonn: Univ. Bonn, Mathematisches Institut (Dissertation 2004), 2005.

\bibitem{post2018locality}
O.~Post and R.~R{\"u}ckriemen.
\newblock Locality of the heat kernel on metric measure spaces.
\newblock {\em Complex Anal. Oper. Theory}, 12(3):729--766, 2018.

\bibitem{rosenberg1997laplacian}
S.~Rosenberg.
\newblock {\em The {Laplacian} on a {Riemannian} manifold. {An} introduction to analysis on manifolds}, volume~31 of {\em Lond. Math. Soc. Stud. Texts}.
\newblock Cambridge: Cambridge University Press, 1997.

\bibitem{ruiz2023dirichlet}
P.~A. Ruiz and F.~Baudoin.
\newblock Dirichlet forms on metric measure spaces as {Mosco} limits of {Korevaar}-{Schoen} energies.
\newblock Preprint, {arXiv}:2301.08273 [math.{FA}] (2023), 2023.

\bibitem{stein1993harmonic}
E.~M. Stein.
\newblock {\em Harmonic analysis: {Real}-variable methods, orthogonality, and oscillatory integrals. {With} the assistance of {Timothy} {S}. {Murphy}}, volume~43 of {\em Princeton Math. Ser.}
\newblock Princeton, NJ: Princeton University Press, 1993.

\bibitem{LipChar}
P.~Stollmann.
\newblock A dual characterization of length spaces with application to {Dirichlet} metric spaces.
\newblock {\em Stud. Math.}, 198(3):221--233, 2010.

\bibitem{sturm1995analysis}
K.-T. Sturm.
\newblock Analysis on local {Dirichlet} spaces. {II}: {Upper} {Gaussian} estimates for the fundamental solutions of parabolic equations.
\newblock {\em Osaka J. Math.}, 32(2):275--312, 1995.

\bibitem{sturm1994analysis}
K.-T. Sturm.
\newblock Analysis on local {Dirichlet} spaces. {III}: {The} parabolic {Harnack} inequality.
\newblock {\em J. Math. Pures Appl. (9)}, 75(3):273--297, 1996.

\bibitem{taylor2010partial}
M.~E. Taylor.
\newblock {\em Partial differential equations. {I}: {Basic} theory}, volume 115 of {\em Appl. Math. Sci.}
\newblock New York, NY: Springer, 2nd ed. edition, 2011.

\bibitem{medianfilter}
A.~Ullrich and T.~Laux.
\newblock Median filter method for mean curvature flow using a random {Jacobi} algorithm.
\newblock Preprint, {arXiv}:2410.07776 [math.{AP}] (2024), 2024.

\end{thebibliography}

\end{document}